\def\MM#1{\boldsymbol{#1}}
\newcommand{\pp}[2]{\frac{\partial #1}{\partial #2}} 
\newcommand{\dede}[2]{\frac{\delta #1}{\delta #2}}
\newcommand{\dd}[2]{\frac{\diff#1}{\diff#2}}
\def\MM#1{\boldsymbol{#1}}
\DeclareMathOperator{\diff}{d}
\DeclareMathOperator{\Tr}{Tr}
\DeclareMathOperator{\uu}{\MM{u}^\delta}
\DeclareMathOperator{\F}{\MM{F}^\delta}
\DeclareMathOperator{\D}{D^\delta}
\DeclareMathOperator{\q}{q^\delta}
\DeclareMathOperator{\Z}{Z^\delta}
\DeclareMathOperator{\Div}{div}
\newtheorem{theorem}{Theorem}
\newtheorem{definition}[theorem]{Definition}
\newtheorem{proposition}[theorem]{Proposition}
\newtheorem{corollary}[theorem]{Corollary}
\newtheorem{remark}[theorem]{Remark}
\newcommand{\revised}[1]{{#1}}
\begin{document}
\title{Energy-enstrophy conserving compatible finite element schemes
  for the rotating shallow water equations with slip boundary conditions}
\author{W. Bauer and C. J. Cotter}
\maketitle

\begin{abstract}
  We describe an energy-enstrophy conserving discretisation for the
  rotating shallow water equations \revised{with slip boundary
    conditions}. This relaxes the assumption of boundary-free domains
  (periodic \revised{solutions} or the surface of a sphere, for
  example) in the energy-enstrophy conserving formulation of McRae and
  Cotter (2014).  This discretisation requires extra prognostic
  vorticity variables on the boundary in addition to the prognostic
  velocity and layer depth variables. The energy-enstrophy
  conservation properties hold for any appropriate set of compatible
  finite element spaces defined on arbitrary meshes with arbitrary
  boundaries.  We demonstrate the conservation properties of the
  scheme with numerical solutions on a rotating hemisphere.
\end{abstract}

\section{Introduction}
For large scale balanced flows, energy and enstrophy are important
quantities for the rotating shallow water equations due to the cascade
of energy to large scales whilst enstrophy cascades to small scales.
At the level of numerical discretisations, energy conservation becomes
important over long time integrations, whilst enstrophy conservation
(or dissipation at the small scale) provides control of the regularity
of the velocity field over long times.

Energy and enstrophy conserving schemes for the rotating shallow water
equations have a long history that goes back to
\citet{arakawa1981potential,sadourny1975dynamics}. The finite
difference schemes in these papers were constructed from two important
ingredients: (1) the vector-invariant form of the equations, and (2)
the use of staggered grid finite difference methods built around
discretisations of the div, grad and curl operators that preserve the
vanishing div-curl and curl-grad identities at the discrete
level. These discretisations form the foundations of several
operational weather, ocean and climate models that are in current
use. Another important practical aspect is that discretisations should
preserve stationary geostrophic modes when applied to the $f$-plane
linearisation of the shallow water equations.
\citet{ringler2010unified} addressed the issue of extending these
properties to C-grid staggered finite difference discretisations on
unstructured orthogonal grids, describing separate energy-conserving
and enstrophy-conserving schemes; \citet{thuburn2012framework}
extended these ideas to non-orthogonal grids, making use of ideas from
discrete exterior calculus
\citep{hirani2003discrete}. \citet{ringler2010unified} also considered
enstrophy dissipation through the Anticipated Potential Vorticity
method, following the structured rectangular grid formulation of \citet{arakawa1990energy}. There is still no known closed form for an energy-enstrophy
conserving C-grid formulation on unstructured grids with an $f$-plane
linearisation that preserves stationary geostrophic modes, but
\citet{eldred2017total} showed that such schemes can be obtained
computationally through numerical optimisation.

In a series of papers,
\citet{salmon2004poisson,salmon2005general,salmon2007general}, Salmon
showed how to use Poisson and Nambu brackets to build conservation
into numerical discretisations. For example, \citet{stewart2016energy}
provided a C-grid discretisation for the multi-layer shallow-water
equations with complete Coriolis force. The variational formulation
finite element method makes it easier to mimic the Poisson bracket
structure of the vector-invariant shallow water equations at the
discrete level, whilst compatible finite element spaces replicate the
div-curl and curl-grad identities in the discrete
setting. \citet{mcrae2014energy} showed that this leads to a natural
energy-enstrophy conserving compatible finite element scheme, with the
bracket structure being exposed in the appendix. The finite element
exterior calculus framework underpinning these properties was exposed
by \citet{cotter2014finite}. The same structure has been exploited to
produce energy-enstrophy conserving discretisations using more exotic
finite element spaces. \citet{eldred2016high} constructed compatible
spaces from splines that allow higher-order approximations constructed
around the low-order C-grid data structure, and
\citet{lee2017discrete} used mimetic spectral elements. In the context
of imcompressible two-dimensional turbulence, \citet{natale2017scale}
considered consistent energy-conserving/enstrophy-dissipating finite
element schemes, including a formulation that extends to a consistent
energy-conserving enstrophy-dissipating version of the
\cite{mcrae2014energy} scheme, and showed that these schemes have
favourable turbulent backscatter properties.

One aspect missing from this framework is the treatment of lateral
boundaries. These are necessary for ocean modelling in the presence of
coastlines, and also in the extension to 3D vorticity conserving
schemes in the atmosphere (since there is a boundary at the Earth's
surface). \revised{In this paper,
  we are considering the inviscid equations with slip boundaries conditions. The addition of dissipative terms will introduce further boundary conditions. We do not discuss in-flow or out-flow boundary conditions.}

\citet{arakawa1990energy} \revised{avoided the consideration of
  boundary conditions} by considering layers that taper to zero depth,
allowing wetting and drying near coastlines as well as allowing layers
to outcrop the top surface in multilayer models.  This approach has
underpinned the formulation of isopycnal ocean models
\citep{hallberg1996buoyancy}.  More recently, \citep{ketefian2009mass}
produced an energy-enstrophy conserving discretisation using the
C-grid approach on structured meshes with boundaries, by making the
key observation that since vorticity cannot be diagnosed from velocity
and height at boundary points, then vorticity on the boundary should
be treated as a prognostic variable with its own conservation
equation. \citet{salmon2009shallow} used a similar idea but blended
between a vorticity-divergence-depth and velocity-depth
formulation. In this paper we show that the introduction of vorticity
degrees of freedom on the boundary can also lead to an
energy-enstrophy conserving formulation in the compatible finite
element setting.

The rest of the paper is structured as follows. In Section
\ref{sec:formulation} we review the compatible finite element
energy-enstrophy conserving formulation, to motivate the issues
relating to boundaries. We then describe the new scheme that
incorporates vorticity degrees of freedom on the boundary in order
to recover energy-enstrophy conservation when boundaries are present.
In Section \ref{sec:numerical} we demonstrate this scheme with numerical
results. Finally in Section \ref{sec:summary} we provide
a summary and outlook.

\section{Formulation}
\label{sec:formulation}
\subsection{Review of the energy-enstrophy conserving formulation
  on domains without boundaries}
The energy-enstrophy conserving formulation in \citet{mcrae2014energy}
starts from the rotating shallow-water equations in vector-invariant form
for velocity $\MM{u}$ and layer depth $D$, where
\begin{align}
  \label{eq:u}
  \MM{u}_t + \MM{F}^\perp q + \nabla\left(\frac{1}{2}|\MM{u}|^2 + gD\right) & = 0, \\
  \label{eq:D}
  D_t + \nabla\cdot\MM{F} & = 0, 
\end{align}
where $\MM{F}$ is the mass flux, and $q$ is the potential vorticity, defined
by
\begin{align}
  \label{eq:F}
  \MM{F} & = \MM{u}D, \\
  \label{eq:q}
  qD & = \nabla^{\perp}\cdot\MM{u}+f, 
\end{align}
and where $\MM{F}^\perp=\MM{k}\times \MM{F}$, $\MM{k}$ is the unit
normal to the domain surface, and $g$ is the acceleration due to
gravity.

\revised{If we consider a fluid flow where the fluid is moving in a
  container $\Omega$ such that now fluid can enter or leave, the
  relevant boundary conditions are
\begin{equation}
  \MM{u}\cdot\MM{n}=\MM{F}\cdot\MM{n}=0, \mbox{ on } \partial\Omega.
\end{equation}
There are then no boundary conditions for advected quantities such as
the layer depth $D$, since there are no advective fluxes through
$\partial\Omega$. Inviscid fluid equations such as the rotating
shallow water equations can be derived from Hamilton's principle
considering a fluid flow with slip boundary conditions, leading to
Poisson bracket formulations; for more details, see \cite{Holm98}.}

In the absence of boundaries (periodic \revised{solutions} or the
surface of a sphere being the main relevant cases), we can introduce
a weak formulation of Equations (\ref{eq:u}-\ref{eq:q}) as follows.
\begin{definition}
  \label{def:weak}
  Let $\Omega$ be a domain without boundary. We seek $\MM{u},\MM{F}\in
  H_{\Div}(\Omega)$, $D\in L^2(\Omega)$, and $q\in H^1(\Omega)$, such
  that
  \begin{align}\label{eq:u_weak}
  \langle \MM{w}, \MM{u}_t \rangle + \langle \MM{w}, q\MM{F}^\perp \rangle
  - \left\langle \nabla\cdot\MM{w}, \frac{1}{2}|\MM{u}|^2 + gD\right\rangle
  & = 0, \quad \forall \MM{w}\in H_{\Div}(\Omega), \\
  \langle \phi, D_t + \nabla\cdot\MM{F} \rangle & = 0, \quad \forall
  \phi\in L^2(\Omega),   \label{eq:D_weak}
  \\
  \langle \MM{v}, \MM{F} - \MM{u}D\rangle & = 0, \quad \forall \MM{v}\in
  H_{\Div}(\Omega), \\
  \label{eq:q_weak}
  \langle \gamma, qD \rangle + \langle \nabla^\perp\gamma, \MM{u}
  \rangle - \langle
  \gamma, f \rangle & = 0, \quad \forall \gamma\in H^1(\Omega),
  \end{align}
  where $\langle\cdot,\cdot\rangle$ is the usual $L^2$ inner product on $\Omega$.
\end{definition}
We introduce the compatible finite element spaces $(V_0, V_1, V_2)$,
that form a discrete de Rham sequence, 
\begin{equation}
\begin{CD}
  H^1 @> \nabla^\perp >> H_{\textrm{div}} @>
  \nabla\cdot
  >> L^2 \\
  @VV{\pi_0}V @VV{\pi_1}V @VV{\pi_2}V \\
  {V}_0 @> \nabla^\perp >> {V}_1 @> \nabla\cdot
  >> {V}_2 \\
\end{CD}
\end{equation}
with commuting, bounded, surjective projections $(\pi_0,\pi_1,\pi_2)$.
For the examples in this paper we have concentrated in the spaces
$V_0=CG_k$, $V_1=BDM_{k-1}$, $V_2=DG_{k-2}$. These are then used to
formulate a Galerkin finite element discretisation of Equations
(\ref{eq:u_weak}-\ref{eq:q_weak}).
\begin{definition}
  \label{def:fem no boundary}
The compatible finite element discretisation 
of the weak formulation in Definition \ref{def:weak} seeks
$(\uu,\D,\F,\q)\in
({V}_1,V_2,{V}_1,{V}_0)$ such that
\begin{align}\label{eq:u_h}
  \langle \MM{w}, \uu_t \rangle + \langle \MM{w}, \q\F^\perp \rangle
  - \left\langle \nabla\cdot\MM{w}, \frac{1}{2}|\uu|^2 + g\D\right\rangle
  & = 0, \quad \forall \MM{w}\in {V}_1, \\
  \langle \phi, \D_t + \nabla\cdot\F \rangle & = 0, \quad \forall
  \phi\in V_2,   \label{eq:D_h}
  \\
  \langle \MM{v}, \F - \uu\D\rangle & = 0, \quad \forall \MM{v}\in
  {V}_1, \\
  \label{eq:q_h}
  \langle \gamma, \q\D \rangle + \langle \nabla^\perp\gamma, \uu \rangle
   - \langle \gamma, f \rangle  & = 0, \quad \forall \gamma\in {V}_0.
\end{align}
\end{definition}
\begin{remark}
Since $D_t$ and $\nabla\cdot\F$ are both in $V_2$,
Equation \eqref{eq:D_h} is equivalent to the equation $D_t+\nabla\cdot\F=0$
holding in $L^2(\Omega)$.
\end{remark}
\begin{remark}
  It is important to note that $\q$ and {$\F$} are merely diagnostic
  variables that can be computed from the prognostic variables
  $\uu$ and {$\D$} at any time.
\end{remark}
These equations have an equivalent (almost\footnote{Almost Poisson
  brackets are anti-symmetric brackets that do not satisfy the Jacobi
  identity. This bracket satisfies the Jacobi identity on smooth
  functions but not when restricted to the finite element spaces.})
Poisson bracket formulation.
\begin{definition}[Almost Poisson bracket formulation]
Let 
$H:{V}_1\times V_2\to \mathbb{R}$ be the Hamiltonian functional, defined by
\begin{equation}
  \label{eq:H}
  H[\uu,\D] = \int_\Omega \frac{1}{2}\D|\uu|^2 + {\frac{1}{2}g\D^2} \diff x.
\end{equation}
We define the bracket $\{\cdot,\cdot\}$ by
\begin{equation}
  \left\{F,G\right\} = -\left\langle \dede{F}{\uu}, \q\dede{G}{\uu}\right\rangle
  + \left\langle \nabla\cdot \dede{F}{\uu}, \dede{G}{\D} \right\rangle
  - \left\langle \nabla\cdot \dede{G}{\uu}, \dede{F}{\D} \right\rangle.
\end{equation}
Then the corresponding almost Poisson bracket formulation defines
dynamics for any functional $F:{V}_1\times V_2\to\mathbb{R}$ by
\begin{equation}
  \dd{}{t}F[\uu,\D] = \left\{F, H\right\},
\end{equation}
where $\F$ and $\q$ are considered to be functions of $\uu$ and $\D$
defined by equations \eqref{eq:D_h} and \eqref{eq:q_h} respectively.
\end{definition}
\begin{proposition}
\label{prop:bracket}
  Equations (\ref{eq:u_h}-\ref{eq:q_h}) imply the 
bracket formulation above.
\end{proposition}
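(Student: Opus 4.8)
The plan is to verify the identity $\dd{}{t}F[\uu,\D]=\{F,H\}$ directly, evaluating each side and matching them term by term. The essential preparatory step is to identify the two functional derivatives of the Hamiltonian \eqref{eq:H} as genuine elements of the finite element spaces. Taking the Gateaux derivative of $H$ in the $\uu$-direction gives $\left\langle \dede{H}{\uu},\MM{w}\right\rangle=\langle \D\uu,\MM{w}\rangle$ for all $\MM{w}\in V_1$; comparing this with the flux equation $\langle \MM{v},\F-\uu\D\rangle=0$ shows that $\dede{H}{\uu}$ and $\F$ are the same Riesz representative in $V_1$, so $\dede{H}{\uu}=\F$. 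Differentiating in the $\D$-direction gives $\left\langle \dede{H}{\D},\phi\right\rangle=\left\langle \tfrac12|\uu|^2+g\D,\phi\right\rangle$ for all $\phi\in V_2$, so that $\dede{H}{\D}$ is the $L^2$-projection of $\tfrac12|\uu|^2+g\D$ onto $V_2$. It is important to keep $\dede{H}{\D}$ as this projected object rather than the pointwise expression, since the latter does not lie in $V_2$.

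Next I would expand the left-hand side by the chain rule, $\dd{}{t}F=\left\langle \dede{F}{\uu},\uu_t\right\rangle+\left\langle \dede{F}{\D},\D_t\right\rangle$. The key structural observation is that $\dede{F}{\uu}\in V_1$ and $\dede{F}{\D}\in V_2$ are themselves admissible test functions, so I would set $\MM{w}=\dede{F}{\uu}$ in \eqref{eq:u_h} and $\phi=\dede{F}{\D}$ in \eqref{eq:D_h} to eliminate the two time derivatives. This replaces $\left\langle \dede{F}{\uu},\uu_t\right\rangle$ by $-\left\langle \dede{F}{\uu},\q\F^\perp\right\rangle+\left\langle \nabla\cdot\dede{F}{\uu},\tfrac12|\uu|^2+g\D\right\rangle$ and $\left\langle \dede{F}{\D},\D_t\right\rangle$ by $-\left\langle \dede{F}{\D},\nabla\cdot\F\right\rangle$, leaving a three-term expression for $\dd{}{t}F$.

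Finally I would substitute the derivatives $\dede{H}{\uu}=\F$ and the projected $\dede{H}{\D}$ into the bracket with $G=H$ and compare. The first and third bracket terms match the expression above immediately: the first reproduces the Coriolis contribution $-\left\langle \dede{F}{\uu},\q\F^\perp\right\rangle$ (the perpendicular being inherited through $\dede{H}{\uu}=\F$), and the third reproduces $-\left\langle \nabla\cdot\F,\dede{F}{\D}\right\rangle$. The crux is the middle term: the bracket contains $\left\langle \nabla\cdot\dede{F}{\uu},\dede{H}{\D}\right\rangle$ with the \emph{projected} $\dede{H}{\D}$, whereas the equations produced $\left\langle \nabla\cdot\dede{F}{\uu},\tfrac12|\uu|^2+g\D\right\rangle$ with the \emph{unprojected} expression. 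These agree precisely because $\nabla\cdot\dede{F}{\uu}\in V_2$, since $\nabla\cdot$ maps $V_1$ into $V_2$ in the compatible de Rham diagram, so pairing against it does not see the difference between $\tfrac12|\uu|^2+g\D$ and its $V_2$-projection. This is the one place where compatibility of the spaces is genuinely used, and stating it cleanly is the step I expect to require the most care; the remainder is bookkeeping.
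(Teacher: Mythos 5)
Your proposal is correct and follows essentially the same route as the paper's proof: identify $\F=\dede{H}{\uu}$ via the flux equation and $\dede{H}{\D}$ as the $L^2$-projection of $\tfrac12|\uu|^2+g\D$ onto $V_2$, then substitute $\MM{w}=\dede{F}{\uu}$ and $\phi=\dede{F}{\D}$ into the evolution equations and match the three terms of the bracket with $G=H$. You even make explicit the point the paper leaves implicit, namely that $\nabla\cdot\dede{F}{\uu}\in V_2$ (compatibility of the spaces) is what permits replacing the unprojected $\tfrac12|\uu|^2+g\D$ by $\dede{H}{\D}$ in the middle term.
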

\begin{proof}
  First we compute the variational derivatives of the Hamiltonian,
  \begin{align}
    {\lim_{\epsilon \rightarrow 0} \frac{1}{\epsilon}\big( H [\uu + \epsilon \MM{w}, \D] - H [\uu , \D]\big) = }
    \left\langle \MM{w}, \dede{H}{\uu} \right\rangle
    & = \left\langle \MM{w}, \D\uu\right\rangle, 
    \quad \forall \MM{w}\in {V}_1, \label{eq:var_Hu}\\ 
    {\lim_{\epsilon \rightarrow 0} \frac{1}{\epsilon}\big( H [\uu, \D + \epsilon \phi  ] - H [\uu , \D]\big) = }
    \left\langle \phi, \dede{H}{\D} \right\rangle
    & = \left\langle \phi, \frac{1}{2}|\uu|^2 + g\D \right\rangle,
    \quad \forall \phi\in {V}_2.
  \end{align}
  {Hence, \eqref{eq:var_Hu} defines the discrete mass flux by $\F: = \dede{H}{\uu} \in V_1$.}
  Then we calculate
  \begin{align}
    \dot{F}[\uu,\D] & = \left\langle \dede{F}{\uu}, \uu_t \right\rangle
    + \left\langle \dede{F}{D}, D_t \right\rangle, \\
    & = \left\langle \dede{F}{\uu}, -\q\F^\perp\right\rangle
    + \left\langle \nabla\cdot\dede{F}{\uu}, \frac{1}{2}|\uu|^2 + g\D
    \right\rangle \revised{- \left\langle \dede{F}{\D}, \nabla\cdot\F \right\rangle,}\\
& = -\left\langle \dede{F}{\uu}, {\q}\dede{H}{\uu}\right\rangle
  + \left\langle \nabla\cdot \dede{F}{\uu}, \dede{H}{\D} \right\rangle
  - \left\langle \nabla\cdot \dede{H}{\uu}, \dede{F}{\D} \right\rangle
    := \left\{F,H\right\},
  \end{align}
  as required, 
  after using $\MM{w}=\dede{F}{\uu}$ and $\phi=\dede{F}{\D}$ in
  Equations (\ref{eq:u_h}-\ref{eq:D_h}).
\end{proof}
\begin{corollary}
  The Hamiltonian (equivalently, the energy) \eqref{eq:H} is conserved.
\end{corollary}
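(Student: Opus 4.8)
The plan is to read off energy conservation directly from the almost Poisson bracket formulation rather than returning to Equations (\ref{eq:u_h}--\ref{eq:q_h}). Taking $F=H$ in $\dd{}{t}F[\uu,\D]=\{F,H\}$ gives $\dd{}{t}H=\{H,H\}$, so it suffices to show that the bracket vanishes on the diagonal, i.e. $\{H,H\}=0$. This reduces the corollary to checking that the bilinear form $\{F,G\}$ is antisymmetric in its two arguments, since any antisymmetric bilinear form is zero when the two slots coincide. The variational derivatives $\dede{H}{\uu}\in V_1$ and $\dede{H}{\D}\in V_2$ needed to evaluate the bracket were already produced in the proof of Proposition \ref{prop:bracket}, so no new analytic input is required.

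To verify antisymmetry I would treat the three terms of $\{F,G\}$ separately. The last two, $\langle \nabla\cdot\dede{F}{\uu}, \dede{G}{\D}\rangle - \langle \nabla\cdot\dede{G}{\uu}, \dede{F}{\D}\rangle$, are sent to minus themselves under the swap $F\leftrightarrow G$ and so cancel identically when $F=G$. For the rotational term the relevant fact is that the operation $(\cdot)^\perp=\MM{k}\times(\cdot)$ is pointwise skew: for any two vector fields $\MM{a},\MM{b}$ one has $\MM{a}\cdot\MM{b}^\perp=-\MM{b}\cdot\MM{a}^\perp$, which follows from the scalar triple product identity $\MM{a}\cdot(\MM{k}\times\MM{b})=\MM{k}\cdot(\MM{b}\times\MM{a})$ together with antisymmetry of the cross product. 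Since $\q$ is a scalar field, multiplying pointwise by $\q$ and integrating preserves this skewness, so $-\langle \dede{F}{\uu}, \q(\dede{G}{\uu})^\perp\rangle$ is antisymmetric under $F\leftrightarrow G$ as well. In particular, setting $F=G=H$ it collapses to $-\langle \dede{H}{\uu}, \q(\dede{H}{\uu})^\perp\rangle=0$, because a vector is pointwise orthogonal to its own perpendicular. Assembling the three pieces gives $\{H,H\}=0$ and hence $\dd{}{t}H=0$.

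I do not anticipate a genuine obstacle: the entire content is the antisymmetry of the bracket, and unlike the Jacobi identity this property is exact and survives restriction to the finite element spaces, since it is built into the definition of $\{\cdot,\cdot\}$ rather than requiring any discrete div--curl or curl--grad identity. The one point worth flagging is notational: the rotational term must be read with the $\perp$ acting on $\dede{G}{\uu}$, consistent with the $\q\F^\perp$ term in \eqref{eq:u_h} and the identification $\F=\dede{H}{\uu}$ coming from \eqref{eq:var_Hu}. Without that $\perp$ the term would be symmetric and the cancellation would fail, so I would state it carefully even though everything else is routine.
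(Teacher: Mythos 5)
Your proposal is correct and takes essentially the same route as the paper: the paper's entire proof is the one-liner $\dot{H} = \{H,H\} = 0$, invoking the anti-symmetry of the bracket exactly as you do. Your term-by-term verification of that anti-symmetry merely fills in details the paper treats as immediate, and your notational flag is well taken --- the first bracket term must indeed be read as $-\langle \dede{F}{\uu}, \q(\dede{G}{\uu})^\perp\rangle$, with the $\perp$ that the paper's displayed definition omits but that is forced by the identification $\F = \dede{H}{\uu}$ in the proof of Proposition \ref{prop:bracket}.
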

\begin{proof}
  This follows immediately from the anti-symmetry of the bracket,
  \begin{equation}
    \dot{H} = \left\{H,H\right\} = 0.
  \end{equation}
\end{proof}
\begin{remark}
  The almost Poisson formulation has an equivalent strong form formulation,
  given by
  \begin{equation}
    \pp{}{t}
    \begin{pmatrix}
    \uu \\
    \D \\
    \end{pmatrix}
    = J
    \begin{pmatrix}
      \dede{H}{\uu} \vspace{1mm} \\
      \dede{H}{\D} \\
    \end{pmatrix},
  \end{equation}
  where $J$ is an $(\uu,\D)$-dependent skew-adjoint structure
  operator defined by
  \begin{equation}
    \int_\Omega
    \begin{pmatrix}
      \MM{w} \\
      \phi \\
    \end{pmatrix}^T
    J\begin{pmatrix}
    \MM{v} \\
    \beta \\
    \end{pmatrix}
    \diff x
    = \{ L_{(\MM{w},\phi)},L_{(\MM{v},\beta)}\}, \quad \forall \MM{w},\MM{v}
    \in V_1, \, \phi,\beta \in V_2,
  \end{equation}
  and $L_{(\MM{w},\phi)}:V_1\times V_2\to \mathbb{R}$ is a functional
  defined by
  \begin{equation}
    L_{(\MM{w},\phi)}[\MM{v},\beta] = \langle \MM{w},\MM{v} \rangle
    + \langle \phi, \beta \rangle.
  \end{equation}
  This will be useful for describing energy-conserving time integration methods
  later.
\end{remark}
Equations (\ref{eq:u_h}-\ref{eq:q_h}) also conserve total potential
vorticity $Q$ and enstrophy $Z$, defined by
\begin{equation}
  Q = \int_\Omega qD\diff x, \quad Z = \int_\Omega {q^2D }\diff x.
\end{equation}
This follows directly from the implied potential vorticity
equation {as shown in the next proposition}.
\begin{proposition}
  \label{prop:q implied}
  Let $(\uu,\D,\F,\q)$ solve the compatible finite element discretisation
  in Definition \ref{def:fem no boundary}. Then $\q$ satisfies the
  equation
  \begin{equation}
    \label{eq:q_h implied}
    \langle \gamma, (\q\D)_t \rangle - \langle \nabla \gamma,
    \q\F\rangle = 0, \quad \forall \gamma \in V_0.
  \end{equation}
\end{proposition}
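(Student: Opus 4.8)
The plan is to derive the potential vorticity equation by differentiating the discrete vorticity relation \eqref{eq:q_h} in time and then eliminating $\uu_t$ using the momentum equation \eqref{eq:u_h} with a carefully chosen test function. First I would fix an arbitrary $\gamma\in V_0$ and differentiate \eqref{eq:q_h} with respect to $t$. Since $\gamma$ and the Coriolis parameter $f$ are both time-independent, this yields immediately
\begin{equation}
\langle \gamma, (\q\D)_t \rangle + \langle \nabla^\perp\gamma, \uu_t \rangle = 0.
\end{equation}

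The key step is to test the momentum equation \eqref{eq:u_h} with the particular choice $\MM{w} = \nabla^\perp\gamma$. This is admissible precisely because the compatible finite element sequence guarantees $\nabla^\perp\gamma \in V_1$ whenever $\gamma\in V_0$. With this choice the Bernoulli term becomes $\langle \nabla\cdot\nabla^\perp\gamma, \frac{1}{2}|\uu|^2 + g\D\rangle$, which vanishes exactly because $\nabla\cdot\nabla^\perp\gamma = 0$ holds pointwise in the discrete de Rham complex. The momentum equation then collapses to $\langle \nabla^\perp\gamma, \uu_t\rangle = -\langle \nabla^\perp\gamma, \q\F^\perp\rangle$.

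Substituting this into the differentiated vorticity relation gives $\langle \gamma, (\q\D)_t\rangle - \langle \nabla^\perp\gamma, \q\F^\perp\rangle = 0$. To recover the stated form I would use the rotational identity $\MM{a}^\perp\cdot\MM{b}^\perp = \MM{a}\cdot\MM{b}$, applied pointwise with $\MM{a}=\nabla\gamma$ and $\MM{b}=\F$, so that $\nabla^\perp\gamma\cdot\q\F^\perp = \nabla\gamma\cdot\q\F$ and hence $\langle \nabla^\perp\gamma, \q\F^\perp\rangle = \langle \nabla\gamma, \q\F\rangle$. This produces exactly \eqref{eq:q_h implied}, and since $\gamma\in V_0$ was arbitrary the result follows.

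I do not expect a serious obstacle here: the argument is an exact discrete analogue of the familiar continuous computation. The only point demanding care --- and the reason the compatible finite element structure is essential --- is that the Bernoulli term must drop out \emph{exactly} rather than merely approximately. This hinges on the identity $\nabla\cdot\nabla^\perp = 0$ being inherited at the discrete level by the spaces $(V_0, V_1, V_2)$, together with the admissibility of $\nabla^\perp\gamma$ as a genuine element of the test space $V_1$; both are furnished by the commuting de Rham diagram rather than by any smoothness of the solution.
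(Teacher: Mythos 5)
Your proposal is correct and is essentially the paper's own proof: both differentiate \eqref{eq:q_h} in time, test \eqref{eq:u_h} with $\pm\nabla^\perp\gamma$ (admissible since the discrete complex gives $\nabla^\perp\gamma\in V_1$), use $\nabla\cdot\nabla^\perp\gamma=0$ to kill the Bernoulli term, and finish with the identity $\langle\nabla^\perp\gamma,\q\F^\perp\rangle=\langle\nabla\gamma,\q\F\rangle$. The only difference is your sign convention for the test function, which is immaterial.
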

\begin{proof}
  Taking the time derivative of \eqref{eq:q_h}, we obtain
  \begin{equation}
    \langle \gamma, (\q\D)_t \rangle 
    = \langle -\nabla^\perp \gamma, \uu_t \rangle, \quad
    \forall \gamma \in V_0.
  \end{equation}
  If $\MM{w}=-\nabla^\perp\gamma$, then $\MM{w}\in V_1$, and
  we may use it in \eqref{eq:u_h} to obtain
  \begin{equation}
    \label{eq:q_h implied derivation}
    \langle -\nabla^\perp\gamma, \uu_t \rangle
    = \langle \nabla^\perp \gamma, \q\F^\perp \rangle
    = {\langle \nabla\gamma, \q\F \rangle,} \quad \forall \gamma \in V_0.
  \end{equation}
  Combining these two equations gives the result.
\end{proof}
\begin{remark}
  Equation \eqref{eq:q_h implied} is the standard Galerkin discretisation
  of the conservation law
  \begin{equation}
    (qD)_t + \nabla\cdot(q\MM{F}) = 0.
  \end{equation}
\end{remark}
\begin{corollary}
  $Q$ and $Z$ are conserved quantities for Equations
  (\ref{eq:u_h}-\ref{eq:q_h}). 
\end{corollary}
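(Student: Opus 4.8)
The plan is to extract both conservation laws from the implied potential vorticity equation \eqref{eq:q_h implied} of Proposition \ref{prop:q implied} by inserting judiciously chosen test functions $\gamma \in V_0$. Conservation of $Q$ is immediate: since $V_0 = CG_k$ contains the constant functions, I would take $\gamma = 1$, for which $\nabla\gamma = 0$, so that \eqref{eq:q_h implied} collapses to $\langle 1, (\q\D)_t\rangle = 0$, which is exactly $\dot{Q} = \dd{}{t}\int_\Omega \q\D\,\diff x = 0$.

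For the enstrophy $Z = \int_\Omega \q^2\D\,\diff x$ I would test \eqref{eq:q_h implied} with $\gamma = \q \in V_0$, obtaining
\begin{equation}
  \langle \q, (\q\D)_t\rangle = \langle \nabla\q, \q\F\rangle .
\end{equation}
Expanding the time derivative by the product rule gives $\langle \q\q_t, \D\rangle + \langle \q^2, \D_t\rangle = \langle \nabla\q, \q\F\rangle$, whereas the quantity I actually wish to control is $\dot{Z} = 2\langle \q\q_t, \D\rangle + \langle \q^2, \D_t\rangle$. Eliminating the cross term $\langle \q\q_t, \D\rangle$ between these two relations, and using $2\q\nabla\q = \nabla(\q^2)$, leaves
\begin{equation}
  \dot{Z} = 2\langle \nabla\q, \q\F\rangle - \langle \q^2, \D_t\rangle = \langle \nabla(\q^2), \F\rangle - \langle \q^2, \D_t\rangle .
\end{equation}

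The step I expect to be the crux is showing that the two remaining terms cancel. The naive move would be to feed $\gamma = \q^2$ into the continuity equation \eqref{eq:D_h}, but this is unavailable since $\q^2$ has polynomial degree $2k$ and so lies in neither $V_0$ nor $V_2$. The way around this is to invoke the Remark following Definition \ref{def:fem no boundary}: because $\D_t$ and $\nabla\cdot\F$ both lie in $V_2$, mass conservation holds in the strong sense $\D_t + \nabla\cdot\F = 0$ in $L^2(\Omega)$. I may therefore multiply this pointwise identity by $\q^2 \in L^2(\Omega)$ and integrate, legitimately even though $\q^2 \notin V_2$, to get $\langle \q^2, \D_t\rangle = -\langle \q^2, \nabla\cdot\F\rangle$. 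Finally, since $\Omega$ has no boundary, integration by parts yields $\langle \nabla(\q^2), \F\rangle = -\langle \q^2, \nabla\cdot\F\rangle$ with no surface contribution, so the two terms in $\dot{Z}$ coincide and $\dot{Z} = 0$. The reliance on the strong $L^2$ form of mass conservation together with the vanishing of the boundary term is precisely the point that will need to be revisited once lateral boundaries are admitted.
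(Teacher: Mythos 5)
Your proposal is correct and follows essentially the same route as the paper's own proof: both use Proposition \ref{prop:q implied} with $\gamma=1$ and $\gamma=\q$, the identity $2\q\nabla\q=\nabla(\q^2)$, the strong $L^2$ form $\D_t+\nabla\cdot\F=0$ to handle the fact that $\q^2\notin V_2$, and boundary-free integration by parts to cancel the remaining terms. The only difference is bookkeeping (you eliminate the cross term $\langle\q\q_t,\D\rangle$ explicitly, while the paper rearranges $\dot{Z}$ as $\langle\q,2(\q\D)_t\rangle-\langle\q^2,\D_t\rangle$), and your closing observation about which steps fail with boundaries matches the paper's subsequent discussion.
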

\begin{proof}
  We have
  \begin{equation}
    Q = \langle 1, \q\D \rangle, \quad Z = \langle \q, \q\D \rangle. 
  \end{equation}
  Since $1\in V_0$, we may use $\gamma=1$ in Equation \eqref{eq:q_h implied},
  to get
  \begin{equation}
    \dot{Q} = \langle 1, (\q\D)_t\rangle = \langle \underbrace{\nabla(1)}_{=0},
    \q\F\rangle = 0.
  \end{equation}
  Similarly, $\q\in V_0$, so we may use $\gamma=\q$ to get
{
  \begin{align}
    \dot{\Z} &= \langle \q , (\q\D)_t \rangle + \langle \q_t,\q\D \rangle, \\
    &= \langle \q , 2(\q\D)_t \rangle - \langle \q^2,D_t \rangle, \\
    &= \langle \q , 2(\q\D)_t \rangle + \langle \q^2, \nabla\cdot\F \rangle, \\
    & = \langle \nabla \q, 2\q \F \rangle+ \langle \q^2, \nabla\cdot\F \rangle, \\
    & = \left\langle \nabla (\q)^2, \F \right\rangle+ \langle \q^2,\nabla\cdot\F \rangle, \\
    & = -\left\langle (\q)^2,\nabla\cdot\F
    \right\rangle+ \langle \q^2,\nabla\cdot\F \rangle = 0.
  \end{align}
  }
\end{proof}
\begin{remark}
  Similar calculations show that $Q$ and $Z$ are Casimirs of the
  almost Poisson bracket, \emph{i.e.}
  \begin{equation}
    \{Q, F\} = \{Z, F\} = 0
  \end{equation}
  for any functional $F$.
\end{remark}
\subsection{Energy-enstrophy conserving formulation
  on domains with boundaries} We now consider the case \revised{of
  slip boundary conditions on $\partial\Omega$}. This requires us to
consider a modified de Rham complex
\begin{equation}
\begin{CD}
  \mathring{H}^1 @> \nabla^\perp >> \mathring{H}_{\textrm{div}} @>
  \nabla\cdot
  >> L^2 \\
  @VV{\pi_0}V @VV{\pi_1}V @VV{\pi_2}V \\
  \mathring{V}_0 @> \nabla^\perp >> \mathring{V}_1 @> \nabla\cdot
  >> {V}_2 \\
\end{CD}
\end{equation}
where
\begin{align}
  \mathring{H}^1 & = \left\{\psi \in H^1: \Tr_{\partial\Omega}\psi=0\right\}, \\
  \mathring{H}_{\textrm{div}} & = \left\{\MM{u} \in H_{\textrm{div}}: \Tr_{\partial\Omega}\MM{u}\cdot\MM{n}=0\right\}, \\
  \mathring{V}_0 &= \left\{\psi \in V_0: \psi=0\right\}, \\
  \mathring{V}_1 &= \left\{\MM{u} \in V_1: \MM{u}\cdot\MM{n}=0\right\},
\end{align}
and where $\Tr_{\partial\Omega}$ is the trace operator returning functions defined
in $L^2(\partial\Omega)$. The presence of $\partial\Omega$ requires the modification of \eqref{eq:q_h} to include a boundary integral,
\begin{equation}
  \label{eq:q_h bdy}
  \langle \gamma, \q\D \rangle + \langle \nabla^\perp\gamma, \uu \rangle
  - \llangle \gamma, \MM{n}^\perp\cdot\uu \rrangle
   - \langle \gamma, f \rangle = 0, \quad \forall \gamma\in {V}_0,
\end{equation}
where $\llangle\cdot,\cdot\rrangle$ defines the $L^2$ inner product on
$\partial\Omega$,
\begin{equation}
  \llangle f, g \rrangle = \int_{\partial\Omega} fg \diff x.
\end{equation}
If we just apply this modification to the discretisation in Definition
\ref{def:fem no boundary}, replacing $V_1$ with $\mathring{V}_1$ and
$V_2$ with $\mathring{V}_2$, then we still have an almost Poisson
bracket formulation (so energy is still conserved). Proposition
\ref{prop:q implied} does not hold though, because we can only take
$w=\nabla^\perp\gamma$ in \eqref{eq:q_h implied derivation} if
$\gamma\in \mathring{V}_0$, but we need to be able to take $\gamma \in
V_0$. In particular, the test functions $1$ and $\q$ required to show
conservation of total potential vorticity and enstrophy are not in
$\mathring{V}_0$ (at least, not in general for $\q$). In numerical
experiments with this formulation we do indeed see unbounded growth in
enstrophy due to sources at the boundary which eventually pollute the
solution throughout the domain.

To resolve this problem, we introduce a vorticity variable $\Z\in V_0$,
such that
\begin{equation}
  \int_\Omega \gamma \Z \diff x = \int_\Omega \gamma \q \D \diff x, \quad
  \forall \gamma \in V_0.
\end{equation}
The projection $\mathring{Z}_0$ of $\Z$ into $\mathring{V}_0$, is thus a 
{diagnostic} variable that can be obtained from $\uu$ according to
\begin{equation}
  \langle \gamma, \mathring{Z} \rangle + \langle \nabla^\perp\gamma, \uu \rangle
   - \langle \gamma, f \rangle = 0, \quad \forall \gamma\in \mathring{V}_0,
\end{equation}
since now $\nabla^\perp\gamma\in \mathring{V}_1$. However, to compute $\Z$
we also need to know its projection $Z'$ onto
$\mathring{V}_0^\perp$, defined as the $L^2$-orthogonal complement of
$\mathring{V}_0$ in $V_0$. $Z'$ may be initialised by obtaining $\Z\in V_0$ from
\begin{equation}
  \langle \gamma, \Z \rangle + \langle \nabla^\perp\gamma, \uu \rangle
  - \llangle \gamma, \MM{n}^\perp\cdot \uu \rrangle
   - \langle \gamma, f \rangle = 0, \quad \forall \gamma\in {V}_0,
\end{equation}
before projecting to $\mathring{V}_0^\perp$ to obtain $Z'$. After initialisation, $Z'$ has its own dynamics as given below.
\begin{definition}
  \label{def:fem boundary}
The compatible finite element discretisation 
of the rotating shallow water equations {seeks} \linebreak
$(\uu,\D,Z',\mathring{Z},\F,\q)\in
(\mathring{V}_1,V_2,\mathring{V}_0^\perp,\mathring{V}_0,\mathring{V}_0,{V}_0)$ such that
\begin{align}
  \label{eq:bcs u}
  \langle \MM{w}, \uu_t \rangle + \langle \MM{w}, \q\F^\perp \rangle
  - \left\langle \nabla\cdot\MM{w}, \frac{1}{2}|\uu|^2 + g\D\right\rangle
  & = 0, \quad \forall \MM{w}\in \mathring{V}_1, \\
  \langle \phi, \D_t + \nabla\cdot\F \rangle & = 0, \quad \forall
  \phi\in V_2,
  \label{eq:bcs D}
  \\ \label{eq:bcs zprime}
  \langle \gamma,Z'_t \rangle - \langle \nabla \gamma, \F \q \rangle &= 0, \quad \forall \gamma \in \mathring{V}_0^\perp, \\
  \label{eq:bcs zring}
  \langle \gamma, \mathring{Z} \rangle + \langle \nabla^\perp\gamma, \uu \rangle
 - \langle \gamma, f \rangle   & = 0, \quad \forall \gamma\in \mathring{V}_0, \\
  \label{eq:bcs F}
  \langle \MM{v}, \F - \uu\D\rangle & = 0, \quad \forall \MM{v}\in
  \mathring{V}_1, \\
  \langle \gamma, \q\D - \mathring{Z} - Z'\rangle  & = 0, \quad \forall \gamma
  \in V_0.
  \label{eq:bcs qZ'}
\end{align}
\end{definition}
\revised{The dynamics of $Z'$ are precisely chosen so as to recover
  the correct dynamics for potential vorticity $q^\delta\in V_0$ (and
  not just the projection of potential vorticity into
  $\mathring{V}_0$), \emph{i.e.} so that Proposition \ref{prop:q implied
    bcs} will hold. This idea is analogous to \cite{ketefian2009mass},
  who introduced vorticity variables at boundary vertices that have
  dynamics that imply the PV equation at the boundary, which is
  otherwise undefined.}

Since $Z'$ is independent of $\uu$ and $\D$, we need to enlarge the phase
space to obtain the bracket formulation for this extended system to include it.
\begin{definition}[Extended almost Poisson bracket formulation]
\label{def:extended bracket}
  Let 
$H:\mathring{V}_1\times V_2\times \mathring{V}_0^\perp\to \mathbb{R}$ be the Hamiltonian functional, defined by
\begin{equation}
  H[\uu,\D, Z'] = \int_\Omega \frac{1}{2}\D|\uu|^2 + {\frac{1}{2}g\D^2}\diff x.
\end{equation}
We define the bracket $\{\cdot,\cdot\}$ by
\begin{equation}
  \left\{F,G\right\} = -\left\langle \dede{F}{\uu}, \q\dede{G}{\uu}\right\rangle
  + \left\langle \nabla\cdot \dede{F}{\uu}, \dede{G}{\D} \right\rangle
  - \left\langle \nabla\cdot \dede{G}{\uu}, \dede{F}{\D} \right\rangle
  + \left\langle \nabla \dede{F}{Z'}, \q\dede{G}{\uu} \right \rangle
  - \left\langle \nabla \dede{G}{Z'}, \q\dede{F}{\uu} \right \rangle.
\end{equation}
Then the corresponding almost Poisson bracket formulation defines
dynamics for any functional $F:\mathring{V}_1\times V_2\times \mathring{V}_0^\perp\to\mathbb{R}$ by
\begin{equation}
  \dd{}{t}F[\uu,\D, Z'] = \left\{F, H\right\},
\end{equation}
where $\mathring{Z}$, $\F$ and $\q$ are considered to be functions of $\uu$ and $\D$
defined by the equations (\ref{eq:bcs zring}-\ref{eq:bcs qZ'}) respectively.
\end{definition}
\begin{proposition}
  Equations (\ref{eq:bcs u}-\ref{eq:bcs qZ'}) imply the 
bracket formulation in Definition \ref{def:extended bracket}.
\end{proposition}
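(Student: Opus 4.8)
The plan is to mirror the proof of Proposition \ref{prop:bracket}, enlarging the chain rule to the extended phase space $(\uu,\D,Z')$. First I would record the three variational derivatives of the Hamiltonian. Since $H$ has the same integrand as before and is independent of $Z'$, the computation of $\langle\MM{w},\dede{H}{\uu}\rangle=\langle\MM{w},\D\uu\rangle$ is identical to \eqref{eq:var_Hu} but now restricted to $\MM{w}\in\mathring{V}_1$, so that together with \eqref{eq:bcs F} it identifies $\dede{H}{\uu}=\F\in\mathring{V}_1$; likewise $\dede{H}{\D}=\frac12|\uu|^2+g\D$, and crucially $\dede{H}{Z'}=0$. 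This last fact is what makes the two new terms of the extended bracket collapse to a single surviving contribution.

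Next I would differentiate an arbitrary functional $F[\uu,\D,Z']$ by the chain rule over all three prognostic variables,
\begin{equation}
  \dd{}{t}F = \left\langle \dede{F}{\uu}, \uu_t \right\rangle + \left\langle \dede{F}{\D}, \D_t \right\rangle + \left\langle \dede{F}{Z'}, Z'_t \right\rangle,
\end{equation}
and substitute the evolution equations. Because $F$ is defined on $\mathring{V}_1\times V_2\times\mathring{V}_0^\perp$, its variational derivatives satisfy $\dede{F}{\uu}\in\mathring{V}_1$, $\dede{F}{\D}\in V_2$ and $\dede{F}{Z'}\in\mathring{V}_0^\perp$, and are therefore admissible test functions in \eqref{eq:bcs u}, \eqref{eq:bcs D} and \eqref{eq:bcs zprime} respectively. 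Taking $\MM{w}=\dede{F}{\uu}$ in \eqref{eq:bcs u} and $\phi=\dede{F}{\D}$ in \eqref{eq:bcs D} reproduces the three terms of the original bracket exactly as in Proposition \ref{prop:bracket}, while the new ingredient $\gamma=\dede{F}{Z'}$ in \eqref{eq:bcs zprime} gives $\langle\dede{F}{Z'},Z'_t\rangle=\langle\nabla\dede{F}{Z'},\q\F\rangle$.

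Finally I would collect the terms and match them against $\{F,H\}$ from Definition \ref{def:extended bracket}: with $\dede{H}{\uu}=\F$ and $\dede{H}{\D}=\frac12|\uu|^2+g\D$ the first three summands coincide with the boundary-free case, the contribution from \eqref{eq:bcs zprime} is precisely the fourth bracket term $\langle\nabla\dede{F}{Z'},\q\dede{H}{\uu}\rangle$, and the fifth term $-\langle\nabla\dede{H}{Z'},\q\dede{F}{\uu}\rangle$ vanishes because $\dede{H}{Z'}=0$. I expect the only delicate points to be bookkeeping rather than substance: checking that $\dede{F}{\uu}$ genuinely lies in the constrained space $\mathring{V}_1$ so that \eqref{eq:bcs u} may be tested against it, and confirming that $\F$, $\q$ and $\mathring{Z}$ are held frozen as diagnostic coefficients — i.e. that the chain rule runs only through the explicit dependence on $(\uu,\D,Z')$ and not through the diagnostic relations \eqref{eq:bcs zring}--\eqref{eq:bcs qZ'}, exactly as in the boundary-free proof.
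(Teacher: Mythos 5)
Your proposal is correct and follows essentially the same route as the paper: the paper's proof simply cites the calculation of Proposition \ref{prop:bracket} for the $(\uu,\D)$ terms and then checks the new $Z'$ dynamics by testing with linear functionals $F=\langle \gamma, Z'\rangle$, $\gamma\in\mathring{V}_0^\perp$, which is exactly your chain-rule term $\langle\dede{F}{Z'},Z'_t\rangle=\langle\nabla\dede{F}{Z'},\q\F\rangle$ together with the observation that $\dede{H}{Z'}=0$ kills the fifth bracket term. You have merely written out in full what the paper leaves implicit, including the identification $\dede{H}{\uu}=\F\in\mathring{V}_1$ via \eqref{eq:bcs F}.
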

\begin{proof}
  The proof is similar to the proof of Proposition \ref{prop:bracket},
  except that we need to also check the dynamics for $Z'$. Taking
  $F=\langle \gamma, Z' \rangle$ for $\gamma\in \mathring{V}_0^\perp$,
  {so that $\dede{F}{Z'} = \gamma$}, we have
  \begin{equation}
    \dot{F} = \langle \gamma, \dot{Z}' \rangle = \{F,H\}=
    \langle \nabla{\gamma}, \q\dede{H}{\uu}\rangle
    = \langle \nabla {\gamma}, \q \F \rangle, 
  \end{equation}
  as required.
\end{proof}
\begin{corollary}
  Equations (\ref{eq:bcs u}-\ref{eq:bcs qZ'}) have conserved energy.
\end{corollary}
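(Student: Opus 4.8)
The plan is to obtain energy conservation directly from the extended almost Poisson bracket formulation, exactly as in the boundary-free case treated by the corollary following Proposition \ref{prop:bracket}. By the preceding proposition, Equations (\ref{eq:bcs u}--\ref{eq:bcs qZ'}) imply $\dd{}{t}F=\{F,H\}$ for every admissible functional $F$ on $\mathring{V}_1\times V_2\times \mathring{V}_0^\perp$; taking $F=H$ gives $\dot H=\{H,H\}$. It therefore suffices to show that the extended bracket of Definition \ref{def:extended bracket} is antisymmetric, since then $\{H,H\}=-\{H,H\}=0$.

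First I would verify antisymmetry term by term under the interchange $F\leftrightarrow G$. The first three terms are identical to those of the boundary-free bracket and their antisymmetry is already established: the two terms involving $\nabla\cdot\dede{\cdot}{\uu}$ paired with $\dede{\cdot}{\D}$ are manifestly carried into minus one another, while the first term is antisymmetric because of the skew structure of the $\MM{k}\times(\cdot)$ (perp) operator implicit in $\q\dede{G}{\uu}$, using the pointwise identity $\langle\MM{a},\q\MM{b}^\perp\rangle=-\langle\MM{b},\q\MM{a}^\perp\rangle$. The two new terms, $\langle\nabla\dede{F}{Z'},\q\dede{G}{\uu}\rangle$ and $-\langle\nabla\dede{G}{Z'},\q\dede{F}{\uu}\rangle$, are evidently swapped into minus each other, so they too form an antisymmetric pair. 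Hence the whole bracket is antisymmetric and $\dot H=\{H,H\}=0$.

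The only point requiring any care is the antisymmetry of the first term, which rests on the pointwise skew-symmetry of the perp operator and not on any boundary argument; everything else follows by inspection. I do not anticipate a genuine obstacle, since no integration by parts — and hence no boundary integral — is needed to establish antisymmetry: all the delicate boundary bookkeeping was already absorbed into the construction of the $Z'$ dynamics in the previous proposition. As an additional sanity check, one may note that $H$ does not depend on $Z'$, so $\dede{H}{Z'}=0$ and the two new bracket terms drop out of $\{H,H\}$ altogether, reducing the computation to the boundary-free case and confirming the result by a second route.
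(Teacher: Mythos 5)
Your proposal is correct and takes essentially the same route as the paper: the paper's proof is the one-line statement that conservation ``follows directly from the almost Poisson bracket formulation,'' i.e.\ antisymmetry of the extended bracket gives $\dot{H} = \{H,H\} = 0$, which is precisely your argument, including the correct reading of the implicit perp in the $\q\,\dede{G}{\uu}$ term needed for the first term's skewness. Your closing observation that $\dede{H}{Z'} = 0$ collapses the two new boundary terms and reduces everything to the boundary-free computation is a sound extra check, though not required.
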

\begin{proof}
  Follows directly from the almost Poisson bracket formulation.
\end{proof}
\begin{proposition}
  \label{prop:q implied bcs}
  Let $(\uu,\D,\F,\q)$ solve the compatible finite element discretisation
  in Definition \ref{def:fem boundary}. Then $\q$ satisfies the
  equation
  \begin{equation}
    \langle \gamma, (\q\D)_t \rangle - \langle \nabla \gamma,
    \q\F\rangle = 0, \quad \forall \gamma \in V_0.
  \end{equation}
\end{proposition}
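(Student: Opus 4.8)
The plan is to mirror the argument of Proposition~\ref{prop:q implied}, but to overcome the obstruction noted above---that $\nabla^\perp\gamma$ lies in $\mathring{V}_1$ only when $\gamma\in\mathring{V}_0$---by splitting an arbitrary test function $\gamma\in V_0$ along the orthogonal decomposition $V_0=\mathring{V}_0\oplus\mathring{V}_0^\perp$ and handling the two components with the two separate evolution equations \eqref{eq:bcs zring} and \eqref{eq:bcs zprime}. First I would differentiate the constraint \eqref{eq:bcs qZ'} in time to obtain $\langle\gamma,(\q\D)_t\rangle=\langle\gamma,\dot{\mathring{Z}}+\dot{Z}'\rangle$ for all $\gamma\in V_0$. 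Since $\dot{\mathring{Z}}\in\mathring{V}_0$ and $\dot{Z}'\in\mathring{V}_0^\perp$, writing $\gamma=\gamma_0+\gamma'$ with $\gamma_0\in\mathring{V}_0$ and $\gamma'\in\mathring{V}_0^\perp$, orthogonality collapses the right-hand side to $\langle\gamma_0,\dot{\mathring{Z}}\rangle+\langle\gamma',\dot{Z}'\rangle$.

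The $\mathring{V}_0^\perp$ term is immediate: it is exactly what the bespoke equation \eqref{eq:bcs zprime} delivers, giving $\langle\gamma',\dot{Z}'\rangle=\langle\nabla\gamma',\q\F\rangle$. For the interior term I would rerun the Proposition~\ref{prop:q implied} argument on $\gamma_0$. Differentiating \eqref{eq:bcs zring} in time (with $f$ fixed) yields $\langle\gamma_0,\dot{\mathring{Z}}\rangle=-\langle\nabla^\perp\gamma_0,\uu_t\rangle$. Because $\gamma_0\in\mathring{V}_0$, the modified de Rham complex guarantees $\nabla^\perp\gamma_0\in\mathring{V}_1$, so it is an admissible test function in \eqref{eq:bcs u}; substituting $\MM{w}=-\nabla^\perp\gamma_0$ and using $\nabla\cdot\nabla^\perp\gamma_0=0$ to kill the kinetic/pressure term leaves $\langle\gamma_0,\dot{\mathring{Z}}\rangle=\langle\nabla^\perp\gamma_0,\q\F^\perp\rangle=\langle\nabla\gamma_0,\q\F\rangle$, the last step using $\MM{a}^\perp\cdot\MM{b}^\perp=\MM{a}\cdot\MM{b}$ as in the boundary-free case.

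Summing the two contributions then gives $\langle\gamma,(\q\D)_t\rangle=\langle\nabla\gamma_0,\q\F\rangle+\langle\nabla\gamma',\q\F\rangle=\langle\nabla\gamma,\q\F\rangle$, which is the claim. I expect the only real obstacle to be conceptual rather than computational: recognising that \eqref{eq:bcs zprime} was engineered precisely to supply the $\mathring{V}_0^\perp$ contribution that the velocity equation cannot provide---that component corresponds to the boundary degrees of freedom where $\nabla^\perp\gamma$ escapes $\mathring{V}_1$---after which the orthogonal splitting and the div-curl identity $\nabla\cdot\nabla^\perp=0$ reduce everything to the routine manipulations already used in Proposition~\ref{prop:q implied}.
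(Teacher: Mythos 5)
Your proposal is correct and follows essentially the same route as the paper's own proof: time-differentiate the constraint \eqref{eq:bcs qZ'}, split $\gamma$ into its $L^2$-orthogonal components in $\mathring{V}_0$ and $\mathring{V}_0^\perp$, handle the interior component via the differentiated \eqref{eq:bcs zring} together with the test function $\MM{w}=-\nabla^\perp\gamma_0$ in \eqref{eq:bcs u}, handle the boundary component directly via \eqref{eq:bcs zprime}, and recombine. No gaps; this matches the paper's argument step for step.
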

\begin{proof}
  Taking the time derivative of \eqref{eq:bcs qZ'}, we obtain
  \begin{align}
   \langle \gamma, (\q\D)_t \rangle 
    & = \langle \gamma, \mathring{Z}_t + Z'_t \rangle, \\
    & = \langle \mathring{\gamma},\mathring{Z}_t \rangle
    + \langle \gamma', {Z}_t' \rangle, \\
\mbox{Equations \eqref{eq:bcs zring} and \eqref{eq:bcs zprime}}\quad   & = \langle -{\nabla^\perp}\mathring{\gamma}, \uu_t \rangle
    + \langle \nabla\gamma', \F \q \rangle \\
\mbox{Equation \eqref{eq:bcs u}}\quad    & = \langle {\nabla^\perp}\mathring{\gamma}, \q\F^\perp \rangle
    + \langle \nabla\gamma', \F \q \rangle \\
    & = \langle \nabla (\mathring{\gamma}+\gamma'), \F\q \rangle = \langle \nabla \gamma, \F\q \rangle
    \quad \forall \gamma \in V_0,
  \end{align}
  as required, where $\mathring{\gamma}$ is the $L^2$ projection of
  $\gamma$ into $\mathring{V}_0$, and $\gamma'$ is the $L^2$
  projection of $\gamma'$ into $\mathring{V}_0^\perp$, so that $\gamma=\mathring{\gamma}+\gamma'$.
\end{proof}
\begin{corollary}
  The total potential vorticity $Z$ and potential enstrophy $Q$ are conserved
  by the discretisation in Definition \ref{def:fem boundary}.
\end{corollary}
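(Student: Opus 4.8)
The plan is to replay the boundary-free conservation argument almost verbatim, the essential difference being that I now draw on Proposition \ref{prop:q implied bcs} in place of Proposition \ref{prop:q implied}. The crucial gain is that the implied potential vorticity equation $\langle\gamma,(\q\D)_t\rangle=\langle\nabla\gamma,\q\F\rangle$ now holds for \emph{all} $\gamma\in V_0$, not merely for $\gamma\in\mathring{V}_0$; this is exactly what the auxiliary variable $Z'$ and its dynamics \eqref{eq:bcs zprime} were engineered to deliver. I would first record the two invariants in the form $Q=\langle 1,\q\D\rangle$ and $Z=\langle\q,\q\D\rangle$, noting that both $1\in V_0$ and $\q\in V_0$, so that each is an admissible test function in the implied PV equation.

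For $Q$ I would take $\gamma=1$, giving $\dot Q=\langle 1,(\q\D)_t\rangle=\langle\nabla 1,\q\F\rangle=0$ since $\nabla 1=0$; this uses nothing about the boundary beyond admissibility of the constant. For $Z$ the computation is the one already carried out in the no-boundary corollary. I would first rewrite $\dot Z=\langle\q,(\q\D)_t\rangle+\langle\q_t,\q\D\rangle$ as $\langle\q,2(\q\D)_t\rangle-\langle\q^2,\D_t\rangle$, a purely algebraic identity from the product rule. Next I would eliminate $\D_t$ using the depth equation \eqref{eq:bcs D}: since $\D_t$ and $\nabla\cdot\F$ both lie in $V_2$ (because $\F\in\mathring{V}_1\subset V_1$), \eqref{eq:bcs D} forces $\D_t+\nabla\cdot\F=0$ pointwise in $L^2$, so the substitution $\D_t=-\nabla\cdot\F$ is legitimate even though $\q^2\notin V_2$ in general. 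I would then apply Proposition \ref{prop:q implied bcs} with $\gamma=\q$ to convert $\langle\q,2(\q\D)_t\rangle$ into $\langle\nabla\q,2\q\F\rangle=\langle\nabla(\q^2),\F\rangle$, integrate by parts, and collect terms to obtain $-\langle\q^2,\nabla\cdot\F\rangle+\langle\q^2,\nabla\cdot\F\rangle=0$.

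The one genuinely new point relative to the boundary-free proof, and the step I expect to need the most care, is the integration by parts: it produces a boundary contribution $\llangle\q^2,\F\cdot\MM{n}\rrangle$, which must be shown to vanish. This is precisely where the slip condition enters, since $\F\in\mathring{V}_1$ encodes $\F\cdot\MM{n}=0$ on $\partial\Omega$ and so annihilates the boundary term. Thus the whole corollary reduces to two bookkeeping checks rather than any new estimate: (i) that the implied PV equation is available for the non-homogeneous test functions $1$ and $\q$, which is the payoff of Proposition \ref{prop:q implied bcs}; and (ii) that the lone boundary term is killed by the mass-flux condition $\F\in\mathring{V}_1$. Once these two observations are in place, the algebra is identical to the no-boundary corollary.
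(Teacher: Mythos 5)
Your proposal is correct and takes essentially the same approach as the paper: the paper's proof is the single line ``follows from the implied PV equation, as for the boundary-free case,'' i.e.\ exactly the computation you replay, using Proposition \ref{prop:q implied bcs} with the test functions $\gamma=1$ and $\gamma=\q$. Your explicit handling of the integration-by-parts boundary term $\llangle \q^2, \F\cdot\MM{n}\rrangle$, which vanishes because $\F\in\mathring{V}_1$, correctly supplies the one detail the paper leaves implicit.
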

\begin{proof}
  The proof follows from the implied PV equation, as for the
  boundary-free case.
\end{proof}

It is inpractical to deal with $\mathring{V}_0^\perp$ as there is no
local basis. However, an equivalent formulation exists.
\begin{definition}[Equivalent extended formulation]
\label{def:equivalent}
  We seek
$(\uu,\D,\F,\q)\in
(\mathring{V}_1,V_2,\mathring{V}_1,{V}_0)$ such that
  \begin{align}
    \label{eq:equivalent u}
  \langle \MM{w}, \uu_t \rangle + \langle \MM{w}, \q\F^\perp \rangle
  - \left\langle \nabla\cdot\MM{w}, \frac{1}{2}|\uu|^2 + g\D\right\rangle
  & = 0, \quad \forall \MM{w}\in \mathring{V}_1, \\
  \langle \phi, \D_t + \nabla\cdot\F \rangle & = 0, \quad \forall
  \phi\in V_2,
  \label{eq:equivalent D}
  \\
  \langle \MM{v}, \F - \uu\D\rangle & = 0, \quad \forall \MM{v}\in
  \mathring{V}_1,
  \label{eq:equivalent F}
  \\
  \langle \gamma, (\q\D)_t \rangle - \langle \nabla \gamma,
  \F\q \rangle & = 0, \quad \forall \gamma
  \in V_0. \label{eq:equivalent (qD)_t}
\end{align}
\end{definition}
\begin{proposition}
  If $\q$ is initialised to satisfy \eqref{eq:q_h bdy} then solving
  the equations in Definition \ref{def:equivalent} is equivalent to solving
  the equations in Definition \ref{def:fem boundary}.
\end{proposition}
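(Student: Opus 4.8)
The plan is to prove the equivalence in both directions, exploiting that the only genuine difference between the two formulations lies in the vorticity sector. Observe first that equations \eqref{eq:equivalent u}, \eqref{eq:equivalent D} and \eqref{eq:equivalent F} are literally identical to \eqref{eq:bcs u}, \eqref{eq:bcs D} and \eqref{eq:bcs F}, so any correspondence between the two systems leaves the momentum, continuity and flux relations untouched. The entire content of the proof is therefore to show that the single evolution equation \eqref{eq:equivalent (qD)_t} carries exactly the same information as the triple \eqref{eq:bcs zprime}, \eqref{eq:bcs zring}, \eqref{eq:bcs qZ'}, given the stated initialisation.

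For the forward direction (Definition \ref{def:fem boundary} implies Definition \ref{def:equivalent}) I would simply invoke Proposition \ref{prop:q implied bcs}: a solution of the full system automatically satisfies $\langle \gamma,(\q\D)_t\rangle - \langle\nabla\gamma,\q\F\rangle = 0$ for all $\gamma\in V_0$, which is precisely \eqref{eq:equivalent (qD)_t}. Together with the three shared equations this shows that $(\uu,\D,\F,\q)$ solves Definition \ref{def:equivalent}, and the initialisation of $Z'$ prescribed before Definition \ref{def:fem boundary} guarantees that $\q$ starts consistent with \eqref{eq:q_h bdy}.

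For the backward direction I would reconstruct the auxiliary variables. Given a solution $(\uu,\D,\F,\q)$ of Definition \ref{def:equivalent}, define $\mathring{Z}\in\mathring{V}_0$ diagnostically by \eqref{eq:bcs zring}, let $\Z$ be the $L^2$-projection of $\q\D$ into $V_0$ (so that $\langle\gamma,\Z\rangle=\langle\gamma,\q\D\rangle$ for all $\gamma\in V_0$), and set $Z':=\Z-\mathring{Z}$. Equation \eqref{eq:bcs qZ'} then holds by construction, and it remains to check that (i) $Z'\in\mathring{V}_0^\perp$, and (ii) $Z'$ obeys its dynamics \eqref{eq:bcs zprime}. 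For (ii), testing $\Z_t$ against $\gamma\in\mathring{V}_0^\perp$ and using that $\mathring{Z}_t\in\mathring{V}_0$ is orthogonal to $\gamma$ reduces $\langle\gamma,Z'_t\rangle$ to $\langle\gamma,(\q\D)_t\rangle$, which is $\langle\nabla\gamma,\F\q\rangle$ by \eqref{eq:equivalent (qD)_t}; this is exactly \eqref{eq:bcs zprime}.

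Statement (i) is the crux, and the step where the hypothesis \eqref{eq:q_h bdy} enters. It is equivalent to the compatibility condition that the $\mathring{V}_0$-component of $\Z$ agrees with the diagnostic $\mathring{Z}$, i.e. $\langle\gamma,\q\D\rangle + \langle\nabla^\perp\gamma,\uu\rangle - \langle\gamma,f\rangle = 0$ for all $\gamma\in\mathring{V}_0$. At $t=0$ this follows from \eqref{eq:q_h bdy}, since the boundary term $\llangle\gamma,\MM{n}^\perp\cdot\uu\rrangle$ vanishes for $\gamma\in\mathring{V}_0$. To propagate it in time I would differentiate the left-hand side and show it stays zero: using $\nabla^\perp\gamma\in\mathring{V}_1$ (a property of the modified de Rham complex) as a test function in \eqref{eq:equivalent u}, together with $\nabla\cdot\nabla^\perp\gamma=0$ and $\langle\nabla^\perp\gamma,\q\F^\perp\rangle=\langle\nabla\gamma,\q\F\rangle$, gives $\langle\nabla^\perp\gamma,\uu_t\rangle = -\langle\nabla\gamma,\q\F\rangle$, which exactly cancels $\langle\gamma,(\q\D)_t\rangle=\langle\nabla\gamma,\q\F\rangle$ from \eqref{eq:equivalent (qD)_t}. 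Hence the compatibility condition is an invariant of the dynamics and holds for all time once imposed initially, establishing (i); note that $\q$ is then recovered from these $\mathring{Z},Z'$ through \eqref{eq:bcs qZ'} by the $\D$-weighted projection, consistent with the equivalent system. I expect this time-invariance argument, rather than any of the routine algebra, to be the main obstacle, as it is the only place where the initialisation is indispensable and where the mimetic structure $\nabla^\perp\colon\mathring{V}_0\to\mathring{V}_1$ must be used.
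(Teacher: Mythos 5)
Your proof is correct and takes essentially the same approach as the paper's: the forward direction invokes Proposition \ref{prop:q implied bcs}, and the backward direction rests on the identical key step of testing \eqref{eq:equivalent u} with $\nabla^\perp\gamma \in \mathring{V}_1$ for $\gamma \in \mathring{V}_0$ and cancelling against \eqref{eq:equivalent (qD)_t} to show the compatibility condition between $\q\D$ and $\uu$ is preserved in time. The only difference is dual bookkeeping: the paper defines $\mathring{Z}$ and $Z'$ as the $L^2$-projections of $\q\D$ onto $\mathring{V}_0$ and $\mathring{V}_0^\perp$ (making \eqref{eq:bcs qZ'} and \eqref{eq:bcs zprime} immediate) and spends the invariance argument on verifying \eqref{eq:bcs zring}, whereas you define $\mathring{Z}$ by \eqref{eq:bcs zring} and spend the same invariance argument on showing $Z' \in \mathring{V}_0^\perp$.
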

\begin{proof}
  Proposition \ref{prop:q implied bcs} means that solving the equations
  in Definition \ref{def:fem boundary} produces a solution to
  the equations in Definition \ref{def:equivalent}. It remains to check
  the converse.

  Let $\uu$, $\D$, $\q$ solve the equations in Definition
  \ref{def:equivalent}, with $\q$ is initialised to satisfy
  \eqref{eq:q_h bdy}.  Equations \eqref{eq:bcs u}, \eqref{eq:bcs D},
  and \eqref{eq:bcs F} also appear in Definition \ref{def:fem
    boundary}, so it remains to check that Equations \eqref{eq:bcs
    zprime}, \eqref{eq:bcs zring} and \eqref{eq:bcs qZ'} are satisfied.
  We define $\mathring{Z}\in \mathring{V}_0$ and $Z'\in \mathring{V}_0^{\perp}$
  according to
  \begin{align}
    \int_\Omega \gamma \mathring{Z} \diff x & = \int_\Omega \q\D \gamma \diff x, \quad \forall \gamma \in \mathring{V}_0, \\
    \int_\Omega \gamma Z' \diff x & = \int_\Omega \q \D \gamma \diff x, \quad
    \forall \gamma \in \mathring{V}_0^\perp.
  \end{align}
  Since $\mathring{V}_0$ and $\mathring{V}_0^\perp$ are orthogonal in
  $L^2(\Omega)$, we obtain $\Z=Z'+\mathring{Z}$ from which 
  \eqref{eq:bcs qZ'} follows.

  Then taking 
  {$\gamma\in \mathring{V}_0^\perp$ in \eqref{eq:equivalent (qD)_t}} recovers
  \eqref{eq:bcs zprime}. If we initialise according to \eqref{eq:q_h bdy},
  then \eqref{eq:bcs zring} is satisfied initially, and it remains to
  check that
  \begin{equation}
    \pp{}{t}\left(\langle \gamma, \mathring{Z}\rangle + \langle
    \nabla^\perp \gamma, \uu \rangle\right) = 0, \quad \forall \gamma \in
    \mathring{V}_0.
  \end{equation}
  We obtain this by time differentiating the definition of $\mathring{Z}$
  as the projection of $\q\D$ into $\mathring{V}_0$ to obtain
  \begin{align}
    \pp{}{t}\langle \gamma, \mathring{Z} \rangle & =
    \pp{}{t} \langle \gamma, \q \D \rangle, \\
   \mbox{Equation \eqref{eq:equivalent (qD)_t}}\quad & = -\langle \nabla \gamma, \F \q \rangle, \\
    & = -\langle \nabla^\perp\gamma, \uu \rangle, \quad
    \forall \gamma\in\mathring{V}_0,
  \end{align}
  as required.
\end{proof}
\begin{definition}[Poisson brackets for equivalent extended
    formulation]
  \label{def:equivalent poisson}
    Let 
$H:\mathring{V}_1\times V_2\times{V}_0\to \mathbb{R}$ be the Hamiltonian functional, defined by
\begin{equation}
  H[\uu,\D, \Z] = \int_\Omega \frac{1}{2}\D|\uu|^2 + {\frac{1}{2}g\D^2} \diff x.
\end{equation}
We define the bracket $\{\cdot,\cdot\}$ by
\begin{equation}
  \left\{F,G\right\} = -\left\langle \dede{F}{\uu}, \q\dede{G}{\uu}\right\rangle
  + \left\langle \nabla\cdot \dede{F}{\uu}, \dede{G}{\D} \right\rangle
  - \left\langle \nabla\cdot \dede{G}{\uu}, \dede{F}{\D} \right\rangle
  + \left\langle \nabla\dede{F}{\Z}, \dede{G}{\uu}\q \right \rangle
  - \left\langle \nabla\dede{G}{\Z}, \dede{F}{\uu}\q \right \rangle,
\end{equation}
where $\q$ is understood to be a function of $\Z$ and $\D$ given
by
\begin{equation}\label{eq:def_Z}
  \langle \gamma, \q \D \rangle = \langle \gamma, \Z \rangle, \quad
  \forall \gamma \in V_0,
\end{equation}
and where  $\F$ is considered to be a function of $\uu$ and $\D$
defined by equation (\ref{eq:bcs F}).
Then the corresponding almost Poisson bracket formulation defines
dynamics for any functional $F:\mathring{V}_1\times V_2\times V_0\to\mathbb{R}$ by
\begin{equation}
  \dd{}{t}F[\uu,\D, Z] = \left\{F, H\right\}.
\end{equation}
\end{definition}
\begin{proposition}
  Equations (\ref{eq:equivalent u}-\ref{eq:equivalent (qD)_t}) imply the 
bracket formulation in {Definition \ref{def:equivalent poisson}.}
\end{proposition}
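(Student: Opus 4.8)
The plan is to follow the template of the proof of Proposition \ref{prop:bracket}, enlarging the chain rule to account for the extra prognostic variable $\Z$. First I would record the variational derivatives of the Hamiltonian. Since $H[\uu,\D,\Z]$ is literally the same integral as in the boundary-free case and carries no explicit dependence on $\Z$, we have $\dede{H}{\uu}=\D\uu=\F$, $\dede{H}{\D}=\frac{1}{2}|\uu|^2+g\D$, and crucially $\dede{H}{\Z}=0$. This last fact is what makes the argument close: in the bracket $\{F,H\}$ the term $-\left\langle \nabla\dede{H}{\Z},\dede{F}{\uu}\q\right\rangle$ drops out, while its antisymmetric partner $\left\langle \nabla\dede{F}{\Z},\dede{H}{\uu}\q\right\rangle=\left\langle\nabla\dede{F}{\Z},\F\q\right\rangle$ survives.

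Next I would expand the total time derivative of an arbitrary functional $F:\mathring{V}_1\times V_2\times V_0\to\mathbb{R}$ by the chain rule,
\begin{equation}
  \dd{}{t}F = \left\langle \dede{F}{\uu},\uu_t\right\rangle + \left\langle \dede{F}{\D},\D_t\right\rangle + \left\langle \dede{F}{\Z},\Z_t\right\rangle,
\end{equation}
and substitute the equations of motion term by term. Taking $\MM{w}=\dede{F}{\uu}\in\mathring{V}_1$ in \eqref{eq:equivalent u} supplies the $\uu_t$ term, and taking $\phi=\dede{F}{\D}\in V_2$ in \eqref{eq:equivalent D} supplies the $\D_t$ term; together these reproduce the first three terms of the bracket exactly as in Proposition \ref{prop:bracket}, with $\F$ understood as $\dede{H}{\uu}$ and $\F$ a function of $(\uu,\D)$ through \eqref{eq:bcs F}.

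The only genuinely new step is the $\Z_t$ term, and this is where the main work lies. Here I would use the defining relation \eqref{eq:def_Z}, namely $\langle\gamma,\q\D\rangle=\langle\gamma,\Z\rangle$ for all $\gamma\in V_0$, and differentiate it in time to obtain $\langle\gamma,\Z_t\rangle=\langle\gamma,(\q\D)_t\rangle$ for all $\gamma\in V_0$. Equation \eqref{eq:equivalent (qD)_t} then converts this into $\langle\gamma,\Z_t\rangle=\langle\nabla\gamma,\F\q\rangle$; choosing $\gamma=\dede{F}{\Z}\in V_0$ (a legitimate test function since \eqref{eq:equivalent (qD)_t} holds for all of $V_0$) gives $\left\langle\dede{F}{\Z},\Z_t\right\rangle=\left\langle\nabla\dede{F}{\Z},\F\q\right\rangle$, which is precisely the surviving $\Z$-term of $\{F,H\}$ identified in the first paragraph. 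Collecting the three contributions and comparing against the bracket in Definition \ref{def:equivalent poisson}, evaluated with the variational derivatives of $H$, yields $\dd{}{t}F=\{F,H\}$. I expect the point requiring care, rather than a true obstacle, to be the bookkeeping that $\q$ is here regarded as a function of $(\Z,\D)$ through \eqref{eq:def_Z} while $\F$ is a function of $(\uu,\D)$ through \eqref{eq:bcs F}, so that the reformulated potential vorticity equation is genuinely an evolution equation for the independent variable $\Z$ and not merely a constraint.
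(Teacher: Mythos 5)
Your proposal is correct and follows essentially the same route as the paper: the $\uu$ and $\D$ contributions are handled exactly as in Proposition \ref{prop:bracket}, and the new $\Z$ dynamics are obtained by noting $\dede{H}{\Z}=0$, time-differentiating \eqref{eq:def_Z}, and invoking \eqref{eq:equivalent (qD)_t}. The only cosmetic difference is that you expand the chain rule for a general functional $F$ and test with $\gamma=\dede{F}{\Z}$, whereas the paper equivalently verifies the bracket dynamics on the linear functionals $F=\langle\gamma,\Z\rangle$, $\gamma\in V_0$.
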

\begin{proof}
  As before it remains to check the dynamics for $\q$. Taking $F=
  \langle\gamma,\Z\rangle$ for $\gamma \in V_0$, and noting
  $\dede{H}{\Z}=0$, we get
  \begin{align}
    \langle \gamma, \Z_t \rangle &= \dot{F} = \{F, H\} \\
    & = \langle \nabla\gamma, \F\q \rangle.
  \end{align}
  On the other hand, {time differentiating \eqref{eq:def_Z} gives}
  \begin{align}
    \langle \gamma, (\q\D)_t \rangle = \langle \gamma, \Z_t \rangle,
  \end{align}
  hence the result.
\end{proof}
\section{Numerical results}
\label{sec:numerical}
\subsection{Energy-conserving time integration}
In this section we demonstrate the numerical scheme, using the
energy-preserving Poisson integrator of \cite{cohen2011linear}. Whilst
the implicit midpoint rule conserves quadratic Hamiltonians exactly,
the energy-preserving Poisson integrator extends this property to
higher-degree polynomials, such as the shallow-water Hamiltonian which
is cubic. Given a Poisson system of the form
\begin{equation}
  \dot{\MM{z}} = J(\MM{z})\dede{}{\MM{z}}H(\MM{z}),
\end{equation}
the Poisson integrator takes the form
\begin{equation}
  \MM{z}^{n+1} = \MM{z}^n + \Delta t J\left(\frac{\MM{z}^n + \MM{z}^{n+1}}{2}\right)
  \overline{\dede{}{\MM{z}}H}, 
  \quad \mbox{where }\overline{\dede{}{\MM{z}}H} = 
  \int_{0}^{1} \dede{}{\MM{z}} H(\MM{z}^n + s(\MM{z}^{n+1}-\MM{z}^n))\diff s.
\end{equation}
For our equivalent extended shallow water discretisation, we have
\begin{align}
  \langle \phi, \overline{\dede{H}{\D}}\rangle &=
   \langle \phi, g \frac{1}{2}\left(\D^n + \D^{n+1}\right)
  + \frac{1}{3}\left(|\uu^n|^2 + \uu^n \cdot \uu^{n+1} + |\uu^{n+1}|^2\right)
  \rangle,  \quad \forall \phi \in V_2, \\
  \langle \MM{w}, \overline{\dede{H}{\uu}} \rangle & =
  \frac{1}{3}\langle \MM{w}, \D^n\uu^n + \frac{1}{2}\D^n\uu^{n+1}
  + \frac{1}{2}\D^{n+1}\uu^n + \D^{n+1}\uu^{n+1}\rangle, \quad
  \forall \MM{w}\in \mathring{V}_1.
\end{align}
Hence, we obtain the following time discretisation,
\begin{align}
  \label{eq:time u}
  \langle \MM{w}, \uu^{n+1} - \uu^n + \Delta t \overline{\dede{H}{\uu}}^\perp
  \left(\frac{\q^{n+1}+\q^n}{2}\right) \rangle - \Delta t\langle \nabla\cdot \MM{w}, \overline{\dede{H}{\D}}
  \rangle & = 0, \quad \forall \MM{w} \in \mathring{V}_1, \\
  \label{eq:time D}
  \langle \phi, \D^{n+1} - \D^n + \Delta t \nabla\cdot \overline{\dede{H}{\uu}}
  \rangle & = 0, \quad \forall \phi \in V_2, \\
  \langle \gamma, \Z^{n+1} - \Z^n\rangle -\Delta t
  \langle \nabla\gamma, \overline{\dede{H}{\uu}} \rangle \left(\frac{\q^{n+1}+\q^n}{2}\right)& = 0, \quad \forall
  \gamma \in V_0.
\end{align}
This is equivalent to solving for
$(\uu^{n+1},\D^{n+1},\F^{n+1/2},\q^{n+1}) \in
\mathring{V}_1\times V_2 \times \mathring{V}_1 \times V_0$,
such that 
\begin{align}
  \langle \MM{w}, \uu^{n+1} - \uu^n + \Delta t {\F^{n+1/2}}^\perp
  \left(\frac{\q^{n+1}+\q^n}{2}\right) \rangle - \Delta t\langle \nabla\cdot \MM{w}, g\frac{\D^{n+1}+\D^n}{2}
  + \overline{K} \rangle  & = 0, \quad \forall \MM{w} \in \mathring{V}_1, \\
  \langle \phi, \D^{n+1} - \D^n + \Delta t \nabla\cdot {\F^{n+1/2}}
  \rangle & = 0, \quad \forall \phi \in V_2, \\
  \label{eq:time F}
  \langle \MM{v}, \F^{n+1/2} - \overline{\F} \rangle & = 0, \quad
  \forall \MM{v} \in \mathring{V}_1, \\
  \label{eq:time q}
  \langle \gamma, \q^{n+1}\D^{n+1} - \q^n\D^n\rangle -\Delta t
  \left\langle \nabla\gamma, {\F^{n+1/2}}\frac{\q^{n+1}+\q^n}{2}
  \right\rangle & = 0, \quad \forall
  \gamma \in V_0,
\end{align}
where
\begin{align}
  \overline{\F} &= \uu^{n+1}\D^{n+1}/3 + \uu^n\D^{n+1}/6 + \uu^{n+1}\D^n/6
  + \uu^n\D^n/3, \\
  \overline{K}& = |\uu^{n+1}|^2/3 + \uu^n\cdot\uu^{n+1}/3
  + |\uu^n|^2/3.
\end{align}

\begin{figure}
  \centerline{\includegraphics[width=8cm]{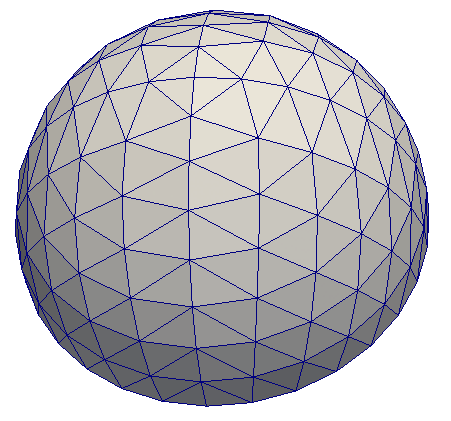}}
  \caption{\label{fig:spheremesh}Refinement level 3 of the octahedral hemisphere mesh used in the numerical examples.}
\end{figure}

In this section, we demonstrate the energy-enstrophy conserving scheme
through the adaptation of two of the popular shallow water sphere
testcases \citep{williamson1992standard} to the hemisphere domain, so
that the equator becomes a boundary. In all cases the mesh used is
(half of) an octahedral mesh obtained by hierarchical refinement of
the triangular faces of an octahedron, before mapping to the sphere so
that lines of constant height are mapped to lines of constant latitude
(see Figure \ref{fig:spheremesh}). We use the compatible finite
element spaces as follows: $P3$ for vorticity, $BDM2$ for velocity,
$P1_{DG}$ for layer depth.  
All numerical calculations are performed
using Firedrake, the automated code generation finite element library
\citep{rathgeber2016firedrake}. In all cases we use a sphere of radius
$R_0 = 6371220m$, with rotation rate $\Omega=7.292\times 10^{-5}s^{-1}$ so
that $f=2\Omega z/R_0$, and graviational acceleration
$g=9.810616ms^{-2}$.

First we consider a convergence test based on Test Case 2, which is a
steady state solid rotation solution, which is also a solution when
restricted to the Northern hemisphere, because the velocity is tangential
to the equator, and the solution is zonally symmetric. The velocity
and height are intialised according to the steady state solution
\begin{align}
  \MM{u} & = -u_0\left(-y, x, 0\right)/R_0, \\
  D & = h_0 - (R_0\Omega u_0 + u_0^2/2.0)\frac{z^2}{R_0^2g}, 
\end{align}
where $h_0=5960m$ and $u_0=2\pi R_0/(12\mbox{ days })$. The equations
are then integrated numerically for 15 days and the solution fields
are compared against the initial conditions. We might expect any
issues associated with consistency errors at the boundary to manifest
themselves as loss of optimal convergence rates in the $L^2$ norm of
these errors; the convergence rates (see Figure \ref{fig:w2 convergence}) match those of the scheme applied
to the full sphere. In these
calculations the relative energy is conserved to 7 decimal places,
although we do observe some small fluctuations below that level of
precision which are due to the tolerance settings of the Newton
solver. As is expected for a steady state solution, we also observe
conservation of relative enstrophy to 6 decimal places even though
conservation is not guaranteed by the time integration method. Plots
of energy and enstrophy for this experiment are shown in Figure
\ref{fig:w2enens}.

\begin{figure}
  \centerline{
  \includegraphics[width=12cm]{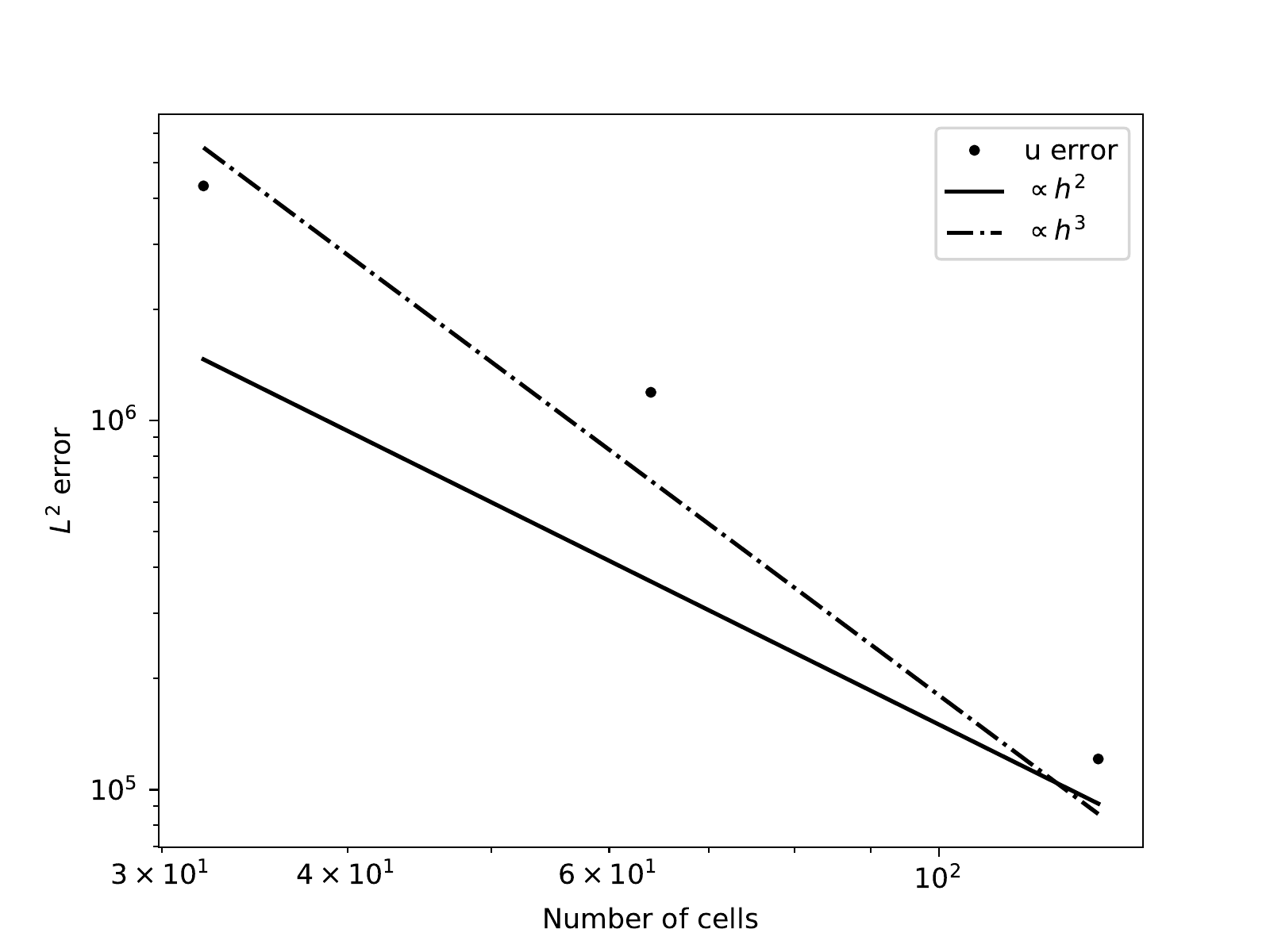}}
  \centerline{
  \includegraphics[width=12cm]{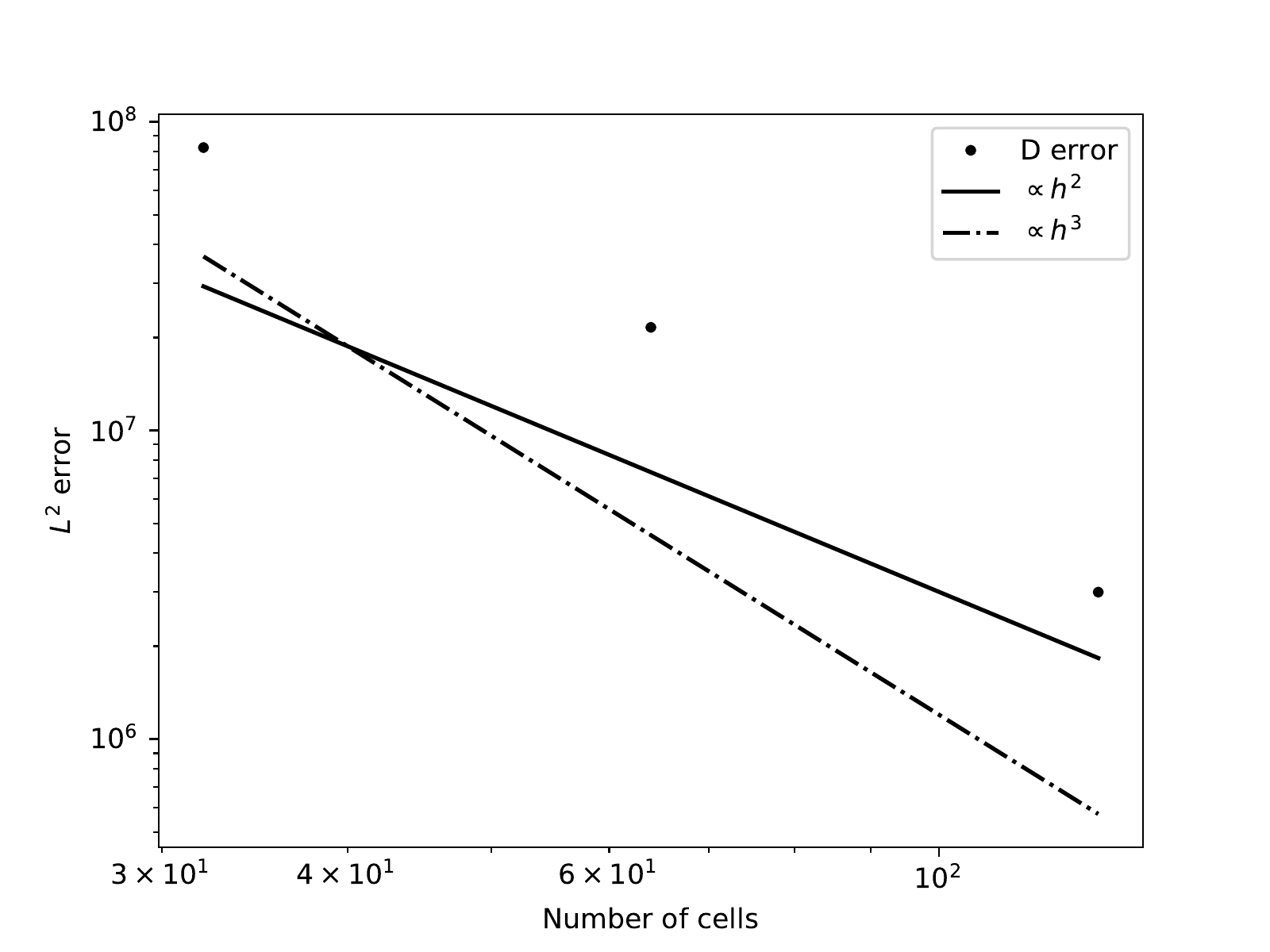}
  }
  \caption{\label{fig:w2 convergence}Plots showing $L^2$ errors
    at three different resolutions (mesh refinement levels 3, 4 and 5)
    in velocity and layer depth. We observe superconvergent results
    in both cases (we would expect second-order convergence for this
    set of finite element spaces), which is probably due to the
    symmetries of the mesh.}
\end{figure}

\begin{figure}
  \centerline{
  \includegraphics[width=12cm]{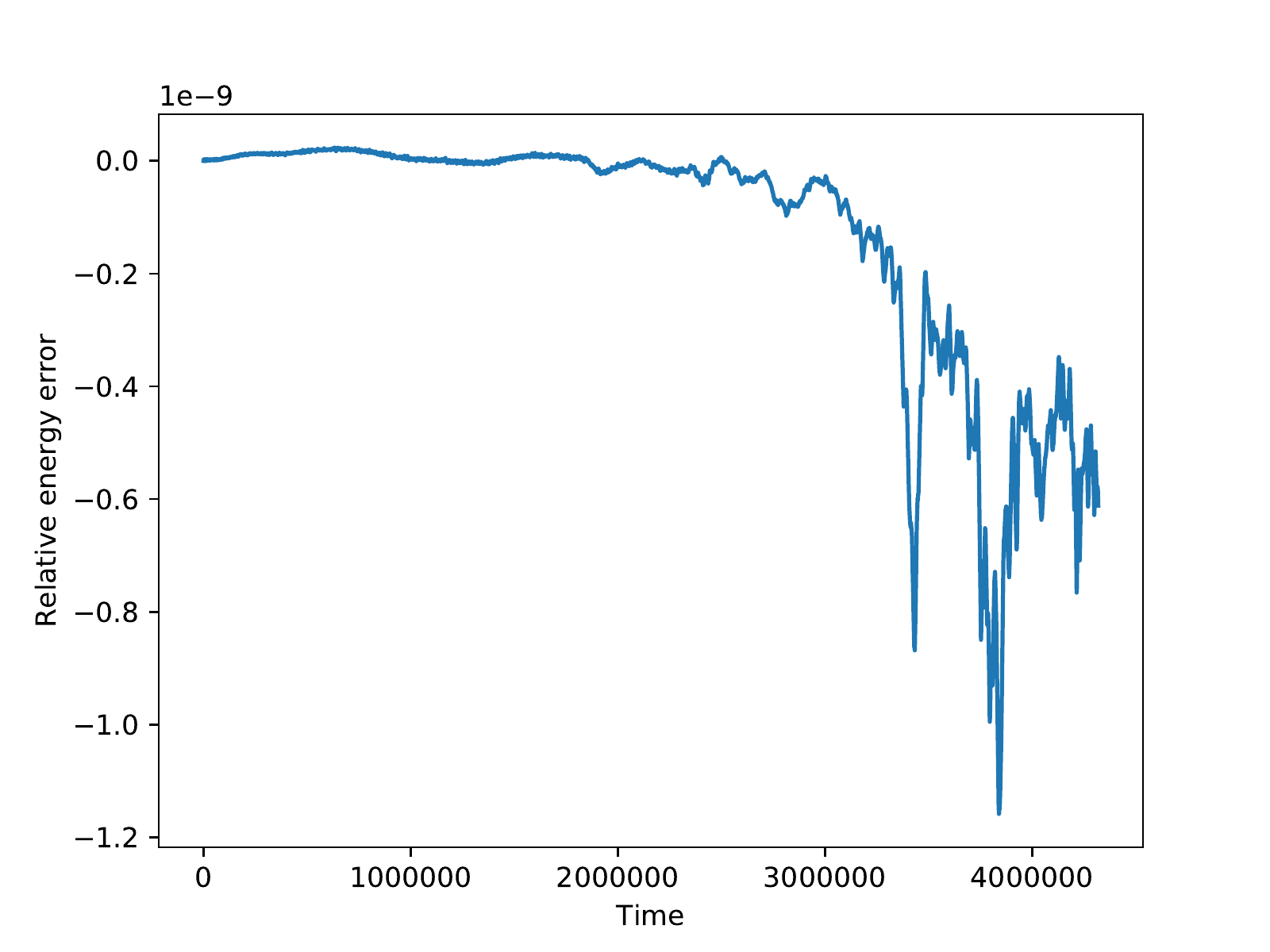}}
  \centerline{
  \includegraphics[width=12cm]{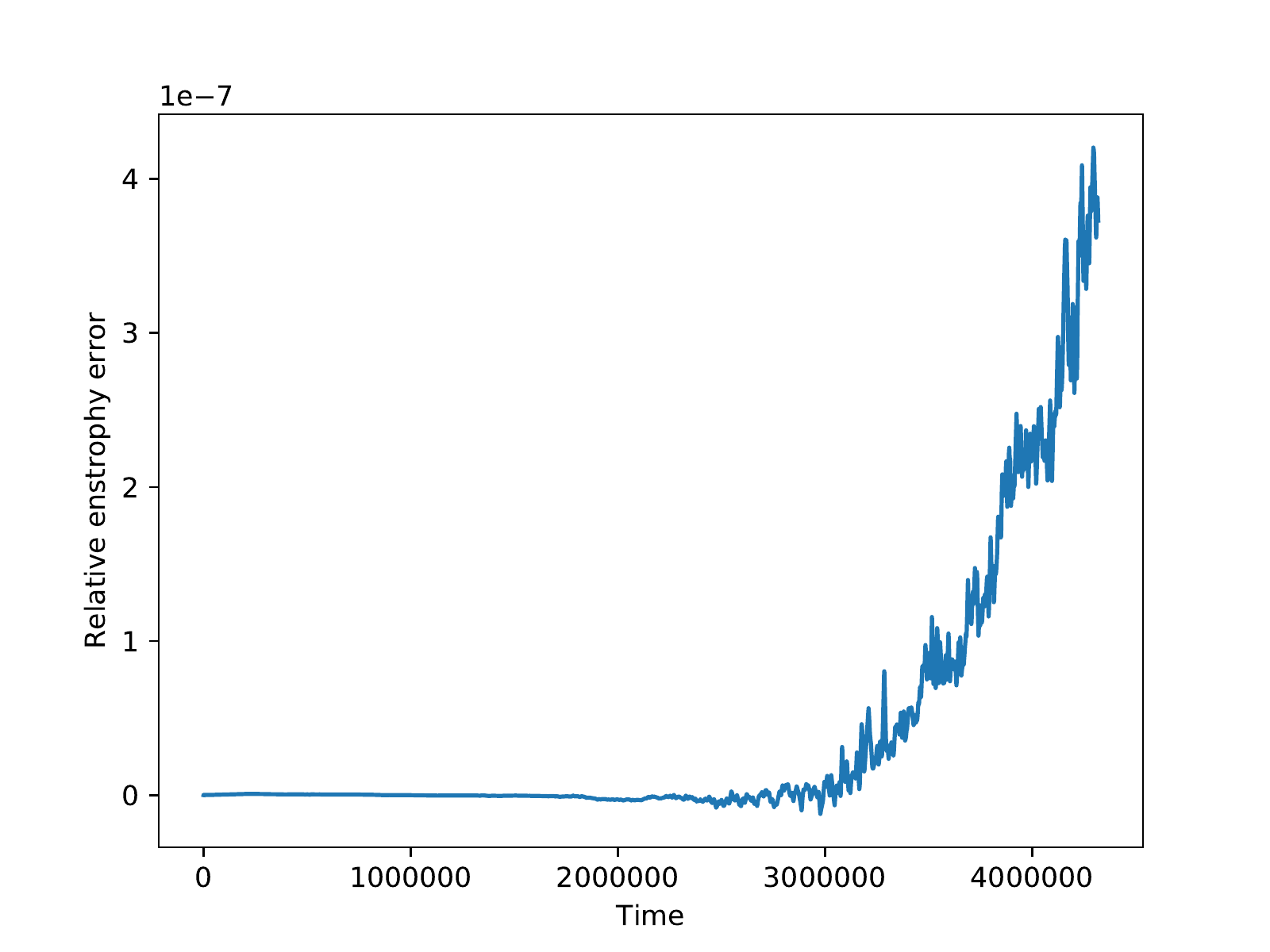}
  }
  \caption{\label{fig:w2enens}Plots showing relative energy and
    enstrophy errors for the Williamson 2 test case at mesh refinement
    level 5. We observe energy conservation up to solver tolerance and
    approximate enstrophy conservation for long times.}
\end{figure}

\revised{To verify that the correct boundary condition is being enforced, we
ran a Kelvin wave testcase in a disk with constant $f$. There are no
analytic solutions for this problem, either for nonlinear or linear
equations, but it is possible to observe that the Kelvin wave
propagation mechanism is functioning correctly. In the limit of large
radius compared to the Rossby deformation radius and the limit of low
amplitude, the solution should approach that of the linear Kelvin wave
along a straight boundary, which propagates at speed $\sqrt{gH}$. We
verified that this is approximately the case by considering a disk of
radius 1, $H=g=1$ so that the wave speed is 1, and $f=10$ so that the deformation radius is 1/10, with initial
data
\begin{equation}
  \MM{u} = \MM{e}_\theta a_0 \exp((r-1)f)y, \quad
  h = a_0\exp((r-1)f)y,
\end{equation}
where $a_0=0.01$ is the wave amplitude. The results of this testcase
are shown in Figure \ref{fig:kelvin}.

\begin{figure}
    \centerline{
    \includegraphics[width=5cm]{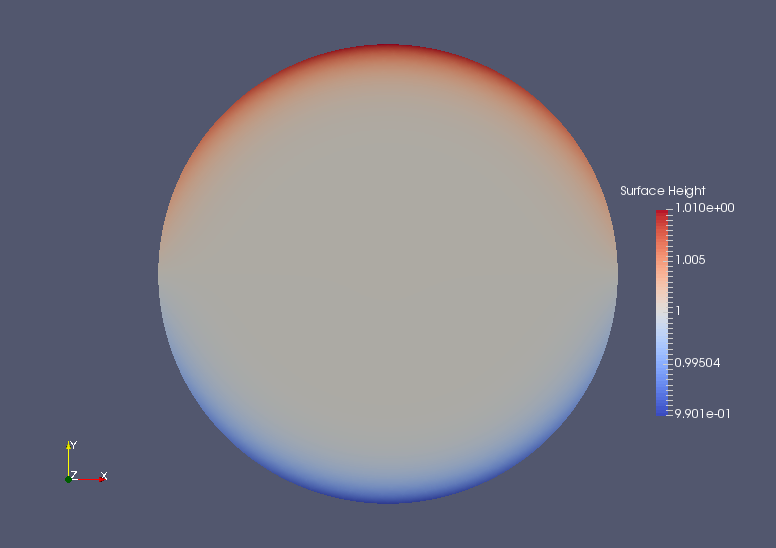}
    \includegraphics[width=5cm]{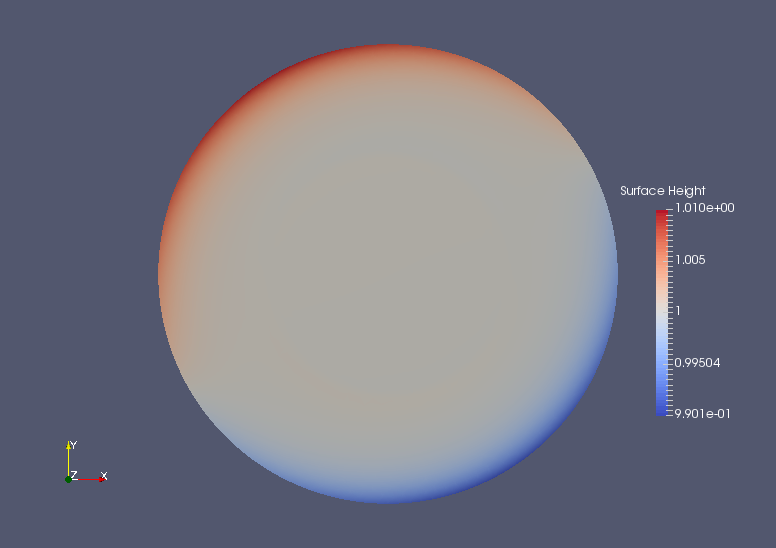}
    \includegraphics[width=5cm]{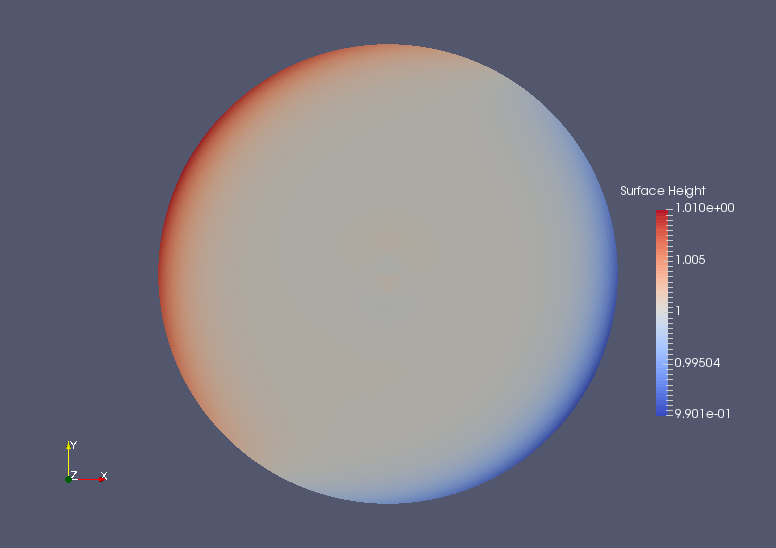}
    }
  \centerline{
    \includegraphics[width=5cm]{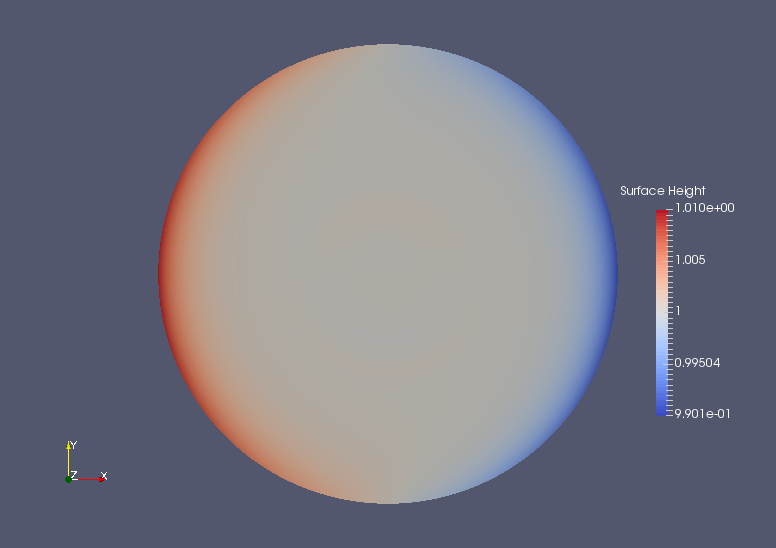}
    \includegraphics[width=5cm]{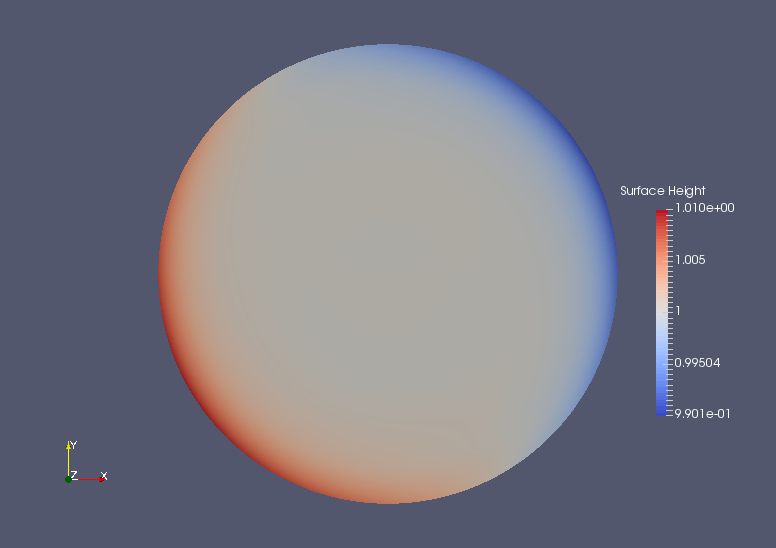}
    \includegraphics[width=5cm]{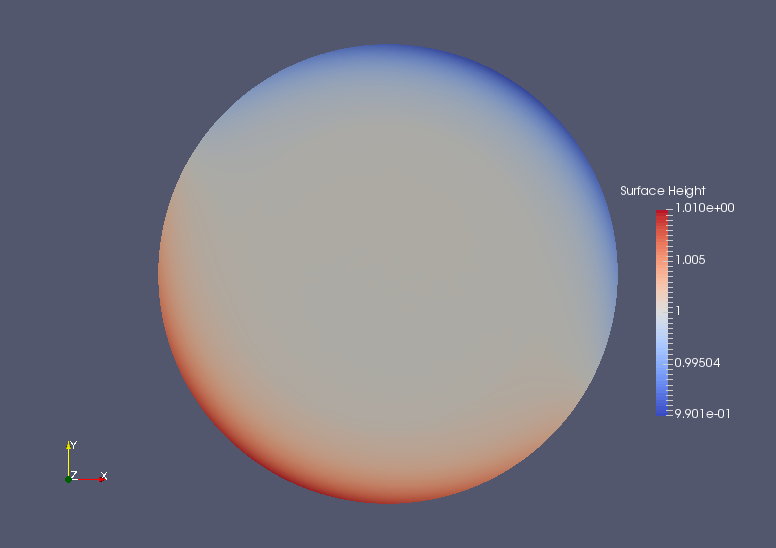}
    }
  \centerline{
    \includegraphics[width=5cm]{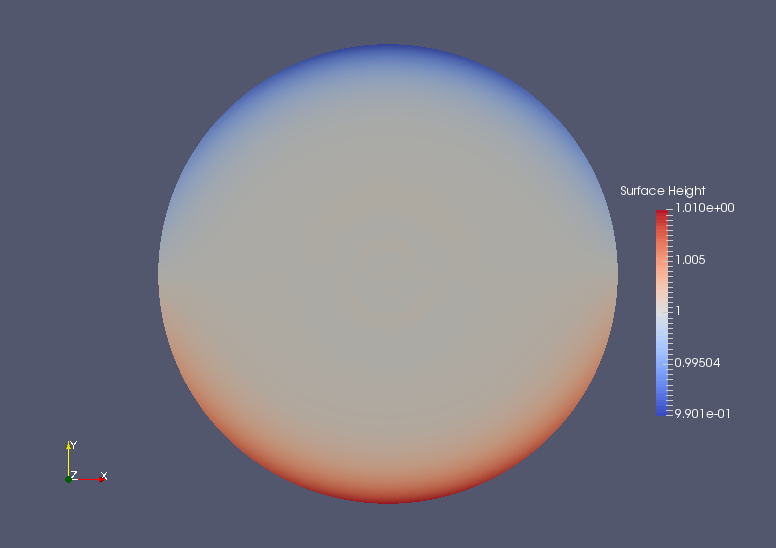}
    \includegraphics[width=5cm]{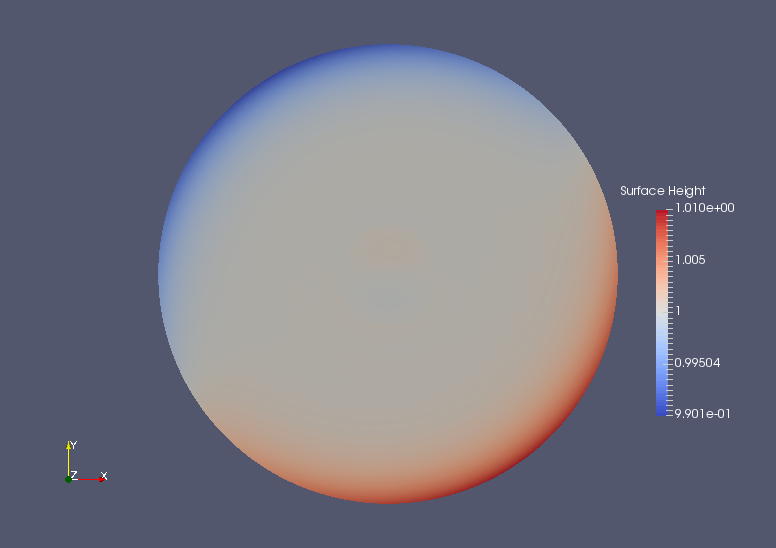}
    \includegraphics[width=5cm]{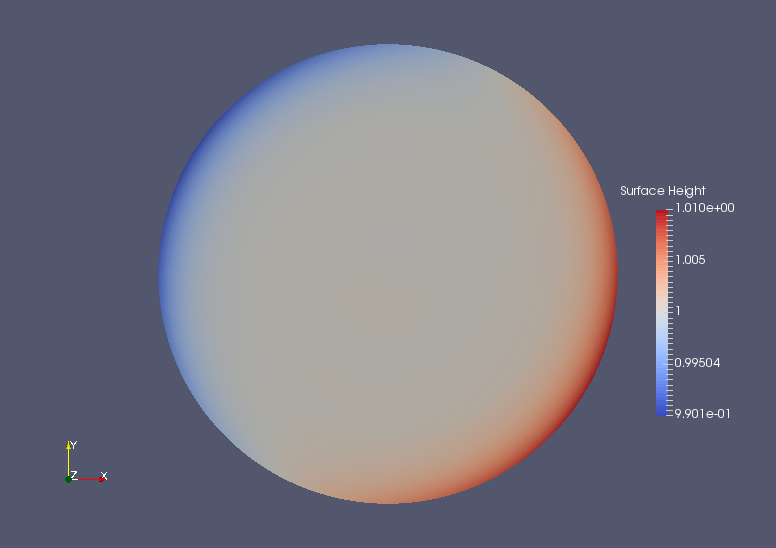}
  }
  \centerline{
    \includegraphics[width=5cm]{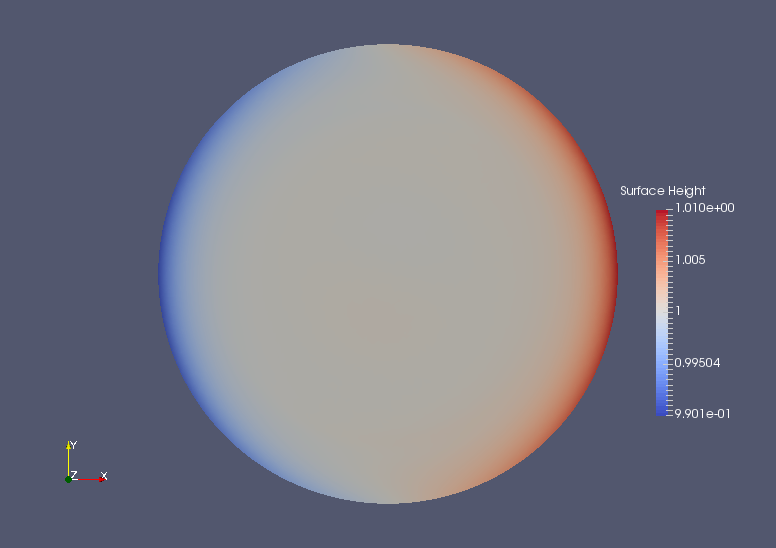}
    \includegraphics[width=5cm]{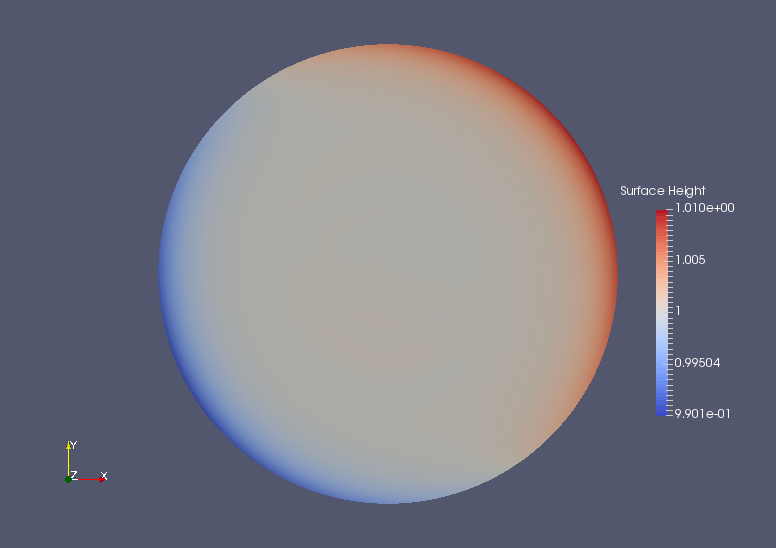}
    }
  \caption{\label{fig:kelvin}Plots showing Kelvin wave propagation in a
    disk with constant $f$, and wave speed 1. We observe the
    near-travelling-wave solution (which is approached as the radius
    of the disk goes to infinity) propagating at this speed. Snapshots
    are shown every 0.5 nondimensional time units until 5 time units. The circumference of the disk is 2$\pi$, so a total rotation of the Kelvin wave should
  take approximately $2\pi$ time units; this is observed.}
\end{figure}}

\subsection{Energy-conserving, enstrophy-dissipating scheme}
For turbulent large scale balanced dynamics, enstrophy conservation is
not necessarily desirable since the enstrophy cascade to small scales
will mean that the enstrophy will accumulate at the gridscale.
\citet{arakawa1990energy} introduced an anticipated potential
vorticity approach, which modifies the equations so that they still
conserve energy but have an upwinded/anticipated potential vorticity
so that enstrophy is dissipated at small scales. Applying the
modification to the energy-enstrophy conserving discretisation of
\citet{arakawa1981potential} preserves these properties at the
discrete level. This approach has been followed by a number of authors
extending the C-grid approach to unstructured grids and compatible
finite element methods. In this paper we modify this approach using a
streamline-upwind Petrov-Galerkin (SUPG) method for the implied
potential vorticity equation. Unlike the anticipated potential
vorticity method, this remains a consistent scheme to the equations
without dissipation, but still introduces enstrophy dissipation at the
gridscale. This approach was originally advocated in
\citet{cotter2014finite,mcrae2015compatible}. The SUPG method, surveyed in
\citet{hughes95}, replaces the test function in the weak formulation
by a modified test function that is biased in the upwind
direction. Since this test function is applied to the entire equation,
this does not alter the residual formulation, only the nature of the
test functions, and hence the scheme is expected to remain consistent
at the appropriate order. SUPG was first applied to the Euler
equations in streamfunction-vorticity formulation by
\citet{tezduyar88,tezduyar89}, and the multiscale behaviour of the
resulting scheme was examined by \citet{natale2017scale}.

The SUPG modification of the energy-enstrophy conserved shallow
water scheme of this paper is obtained by replacing $q^\delta$
in Equation \eqref{eq:bcs u} by $q^*$ given by
\begin{equation}
  q^* = q^\delta - \frac{\tau}{\D}\left((q^\delta D^\delta)_t + \nabla\cdot(
  q^\delta \F)\right),
\end{equation}
where $\tau$ is a chosen time parameter. Equivalently, making use of
the fact that $D_t^\delta+\nabla\cdot\F=0$ in $L^2$, we have
\begin{equation}
  q^* = q^\delta - \tau\left(q^\delta_t + \frac{\F}{D^\delta}\cdot \nabla q^\delta\right).
  \label{eq:q*alt}
\end{equation}
If $q^\delta$, $\F$ and $D^\delta$ are replaced with the exact solutions $q$,
$\MM{F}$ and $D$ respectively, this extra term vanishes, because the residual
of the equation is zero. We see that this is a consistent modification
to the equations. Further, since this replacement is equivalent to
replacing $q^\delta$ with $q^*$ in the Poisson bracket from Definition
\ref{def:extended bracket}, we see that the modified formulation still
conserves energy.

After manipulations identical to the previous section, the equivalent
PV equation is
\begin{equation}
  \langle \gamma, \left(q^\delta D^\delta\right)_t
  \rangle -\left\langle \nabla \gamma,
  \left(q^\delta - \frac{\tau}{D^\delta}\left((q^\delta D^\delta)_t + \nabla\cdot(
  q^\delta \F)\right)\right)\F\right\rangle = 0, \quad \forall \gamma \in V_0,
\end{equation}
which rearranges to
\begin{equation}
  \left\langle \gamma + \frac{\tau}{D^\delta}\F\cdot\nabla\gamma, \left(q^\delta D^\delta\right)_t
  + \nabla\cdot(q^\delta \F)
  \right\rangle = 0, \quad \forall \gamma \in V_0.
\end{equation}
This is the SUPG discretisation of the potential vorticity conservation
equation. Selecting $\gamma=1$ implies that the total potential vorticity
is still conserved.

After using \eqref{eq:q*alt} to rewrite the SUPG formulation as 
\begin{equation}
  \left\langle \gamma + {\tau}\F\cdot \nabla\gamma, q^\delta_t
  + \frac{\F}{D^\delta}\cdot\nabla q^\delta
  \right\rangle = 0, \quad \forall \gamma \in V_0,
\end{equation}
we see that there is the possibility for enstrophy
dissipation since the term
\begin{equation}
  \left\langle \frac{\tau}{D^\delta}\F\cdot \nabla\gamma,
  \F \cdot \nabla q^\delta
  \right\rangle = 0,
\end{equation}
is positive semi-definite, \emph{i.e.} setting $\gamma=q^\delta$ gives a non-negative number. Indeed, this is the term that arises in the
anticipated potential vorticity method translated to compatible finite
element methods in \citet{mcrae2014energy}.

The other SUPG term
\begin{equation}
  \left\langle \tau \F\cdot\nabla\gamma,
  q^\delta_t
  \right\rangle = 0,
\end{equation}
is sign indefinite, and so $\tau$ needs to be sufficiently large to
guarantee monotonic decay in $Z$. The presence of this term means that
the scheme is consistent. When the solution is well-resolved, there
will be no dissipation of enstrophy, which only becomes significant
once the solution becomes marginally-resolved.

In this numerical example, we used a semi-implicit 
timestepping formulation for a more efficient implementation.
Our semi-implicit scheme can be thought of as a fixed number of
Picard iterations towards the energy-conserving time integrator
given above. Writing Equations (\ref{eq:time u}-\ref{eq:time D})
in the form
\begin{equation}
  R_u[\uu^{n+1},\D^{n+1};\MM{w}] = 0, \, \forall \MM{w}\in \mathring{V}_1, \quad
  R_D[\uu^{n+1},\D^{n+1};\phi] = 0, \, \forall \phi\in V_2,
\end{equation}
where any dependency on $\q^{n+1}$ or $\F^{n+1/2}$ is obtained by
solving Equations \eqref{eq:time F} and \eqref{eq:time q} using
$\uu^{n+1}$ and $\D^{n+1}$.

Then to implement the timestep, we perform some number (4, in the case
of this example) of fixed point iterations as follows. In the first
iteration we take the initial guess $\uu^{n+1}=\uu^n$,
$\D^{n+1}=\D^n$. Then we solve for $\Delta \MM{u}\in \mathring{V}_1$,
$\Delta D\in V_2$ such that
\begin{align}
  \label{eq:linear u}
  \langle \Delta\MM{u}, \MM{w} \rangle + \frac{\Delta t}{2}\langle \MM{w},
  f\Delta \MM{u}^\perp \rangle - \frac{\Delta t}{2}\langle \nabla\cdot \MM{w},
  \Delta D\rangle & = - R_u[\uu^{n+1},\D^{n+1};\MM{w}], \quad
  \forall \MM{w}\in \mathring{V}_1, \\
  \label{eq:linear D}
  \langle \Delta D + H\nabla\cdot\Delta\MM{u}, \phi \rangle
  & = - R_D[\uu^{n+1},\D^{n+1};\phi], \quad
  \forall \phi\in \mathring{V}_2.
\end{align}
Then we update $\uu^{n+1} \mapsfrom \uu^{n+1}+\Delta\MM{u}$, $\D^{n+1}\mapsfrom
\D^{n+1}+\Delta D$. This completes one fixed point iteration.

The system (\ref{eq:linear u}-\ref{eq:linear D}) can be hybridised and
statically condensed to obtain a sparse discrete Helmholtz problem
which we assembled using the Slate subpackage of Firedrake 
\citep{gibson18}. We used $\tau=\Delta t$, with $\Delta t=$50s on mesh
refinement level 5, where we observed monotonic decay in the
enstrophy.

In this example, we used the initial condition of test case 5 from
\citet{williamson1992standard}, in which a balanced flow in solid
rotation is disturbed by the sudden appearance of a conical mountain
at time 0. The difference in this example is that the equations are
solved on the hemisphere, not the sphere. This means that a comparison
cannot be made with other integrations, but we can check the stability
of the scheme and observe the energy and enstrophy behaviour for long
integration times once small scale vorticity filaments appear.

Snapshots of the solution are shown in Figure \ref{fig:supg6}. These
solutions show the formation of vortex filaments that roll up and
stretch until they reach the grid scale, where they become more noisy
(but do not pollute the whole domain with oscillations as they do in
the enstrophy-conserving case). Plots of energy and enstrophy
are shown in Figure \ref{fig:w5enens}. As expected we see approximate
energy conservation over all times, and enstrophy eventually starts to
decay when gridscale features appear. We also observed conservation of
total potential vorticity up to round-off error.

\revised{To make a more careful study of the energy conservation, we
  performed a convergence experiment where we ran the same mountain
  test case for 15 days at mesh refinement level 6, to obtain a more
  interesting flow. We then used this as the initial condition for a
  finite time interval convergence test checking for energy
  conservation as $\Delta t$ goes to zero, verifying that the spatial
  discretisation is exactly energy-conserving. This is demonstrated in
  Figure \ref{fig:longrun}.}

\begin{figure}
  \centerline{
    \includegraphics[width=5cm]{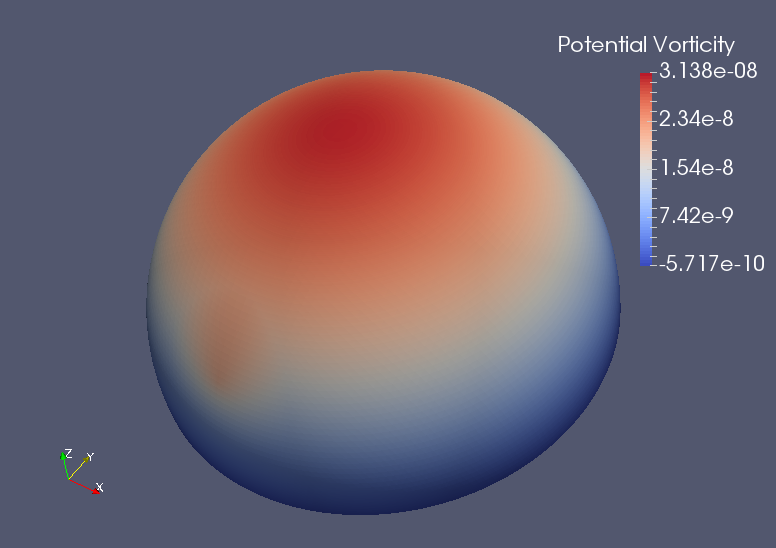}
    \includegraphics[width=5cm]{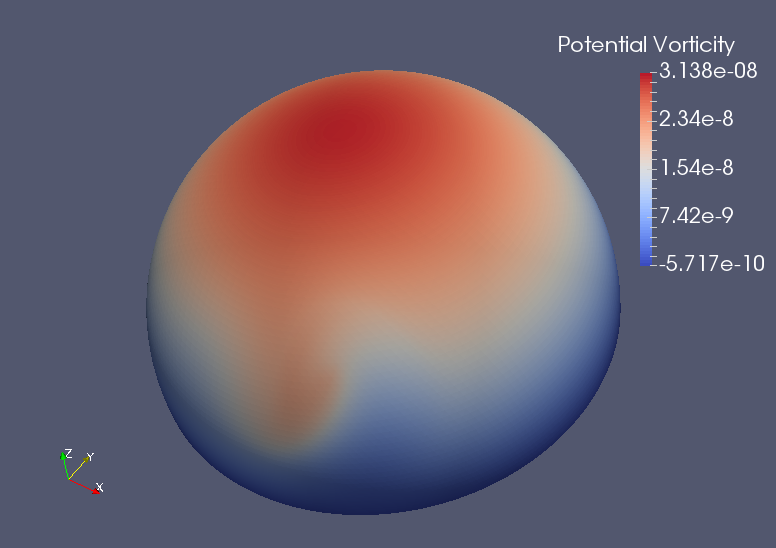}
    \includegraphics[width=5cm]{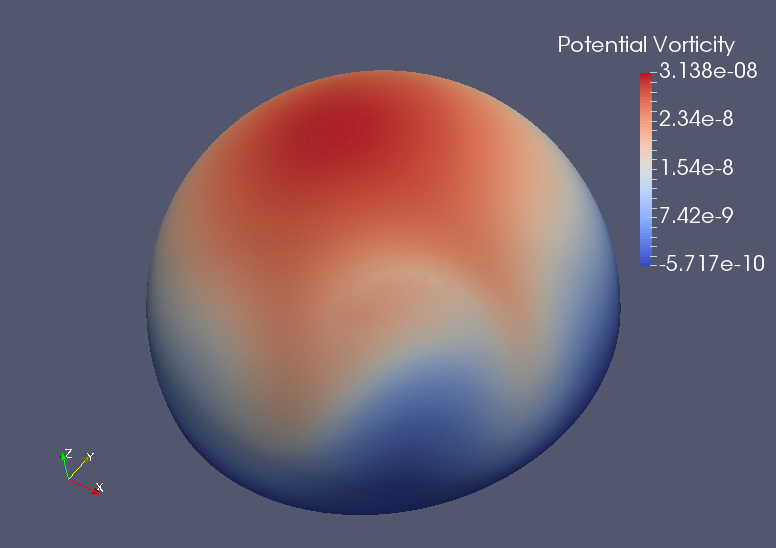}
    }
  \centerline{
    \includegraphics[width=5cm]{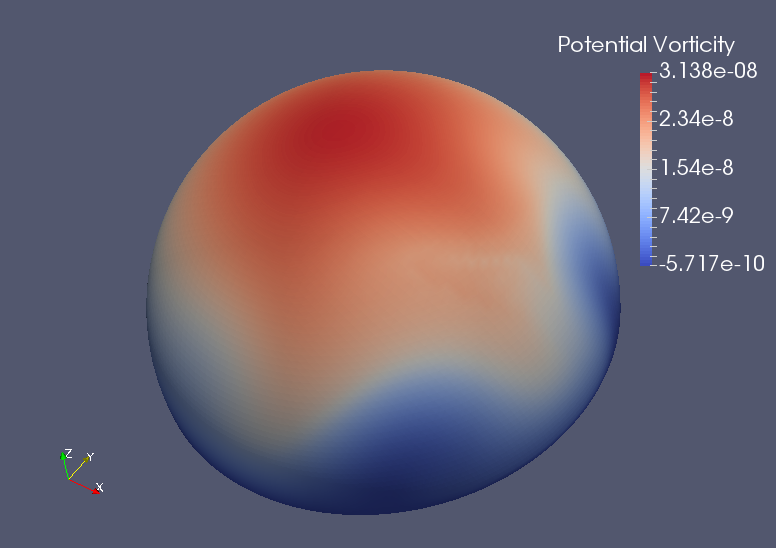}
    \includegraphics[width=5cm]{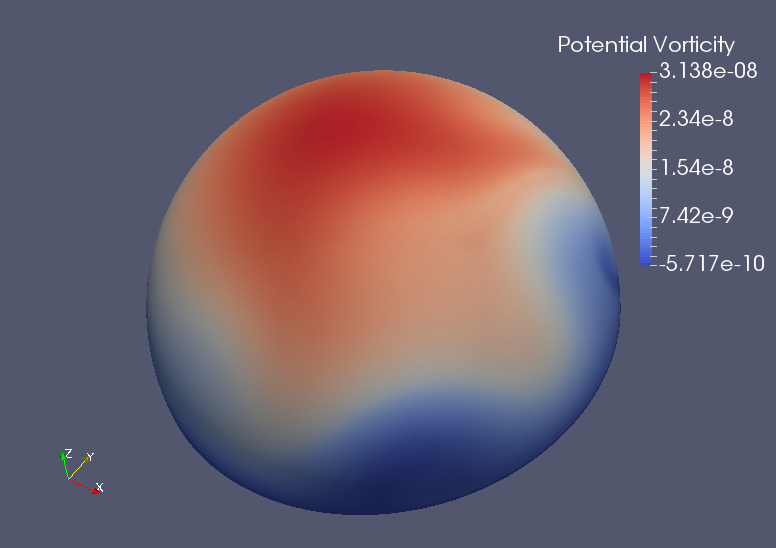}
    \includegraphics[width=5cm]{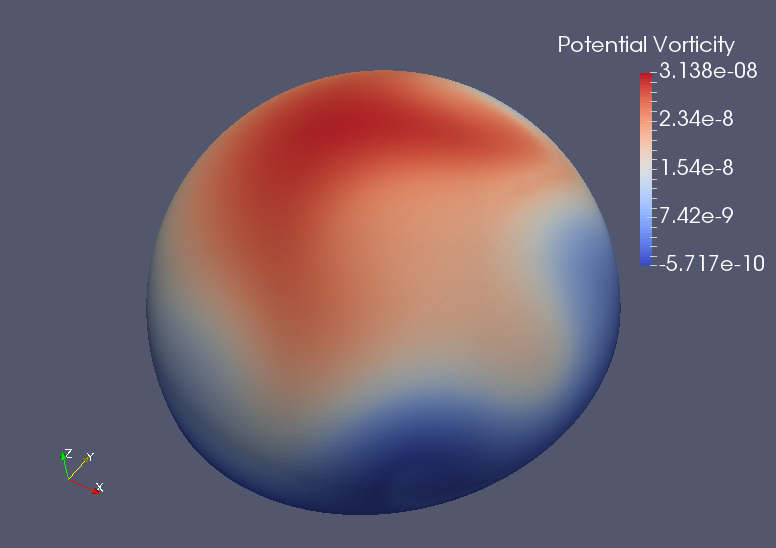}
    }
  \centerline{
    \includegraphics[width=5cm]{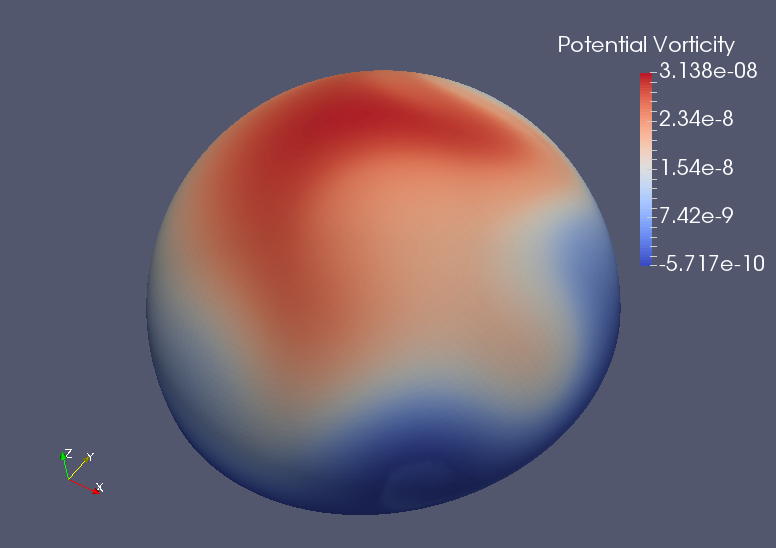}
    \includegraphics[width=5cm]{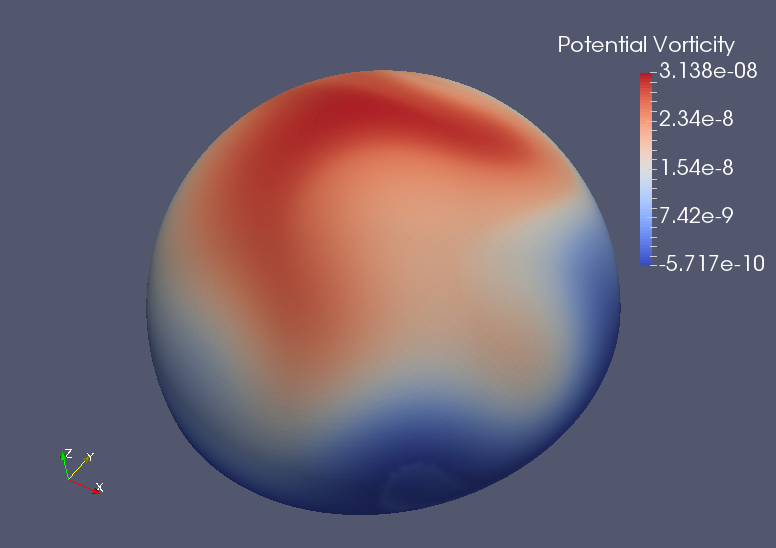}
    \includegraphics[width=5cm]{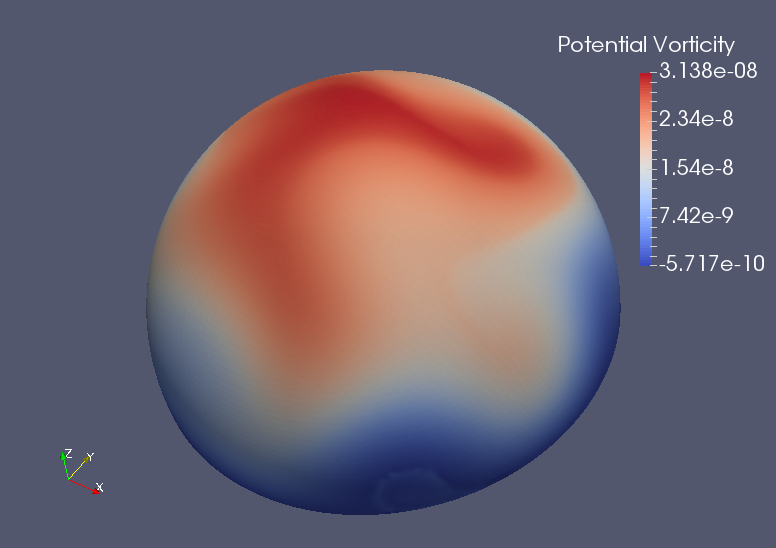}
  }
  \centerline{
    \includegraphics[width=5cm]{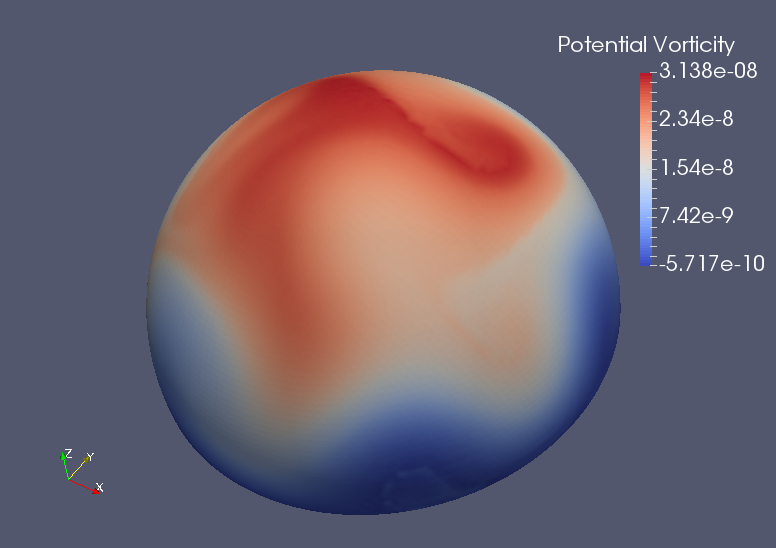}
    \includegraphics[width=5cm]{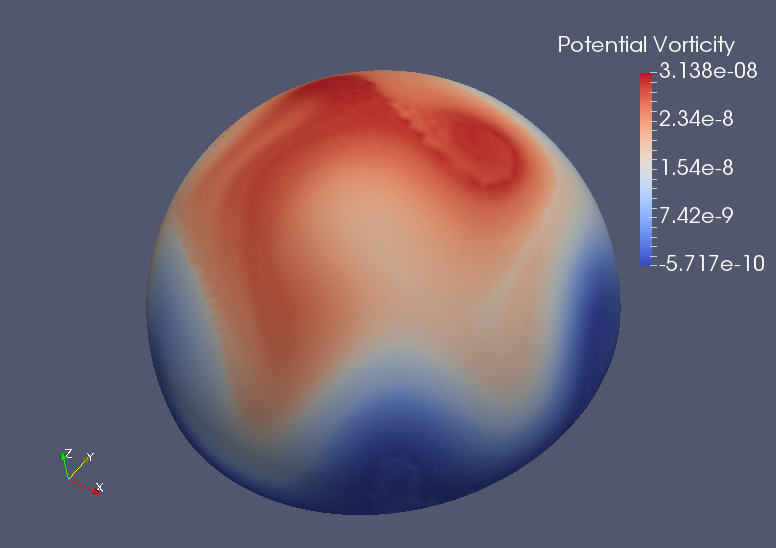}
    \includegraphics[width=5cm]{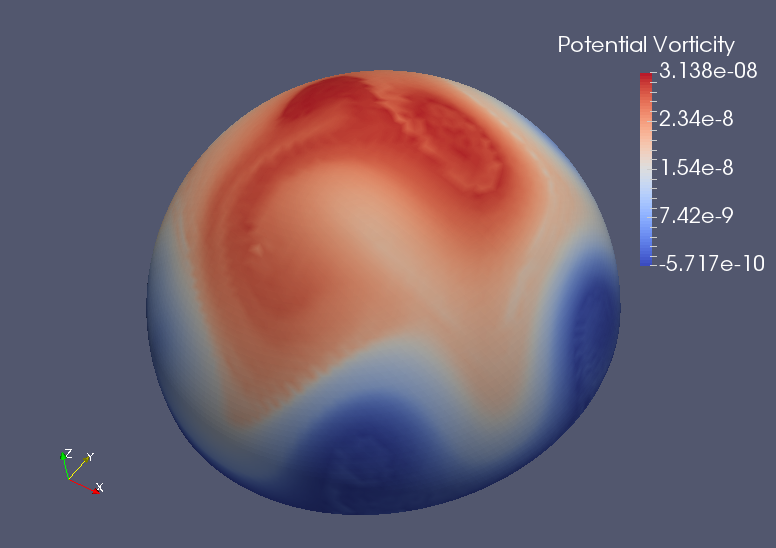}
    }
  \centerline{
    \includegraphics[width=5cm]{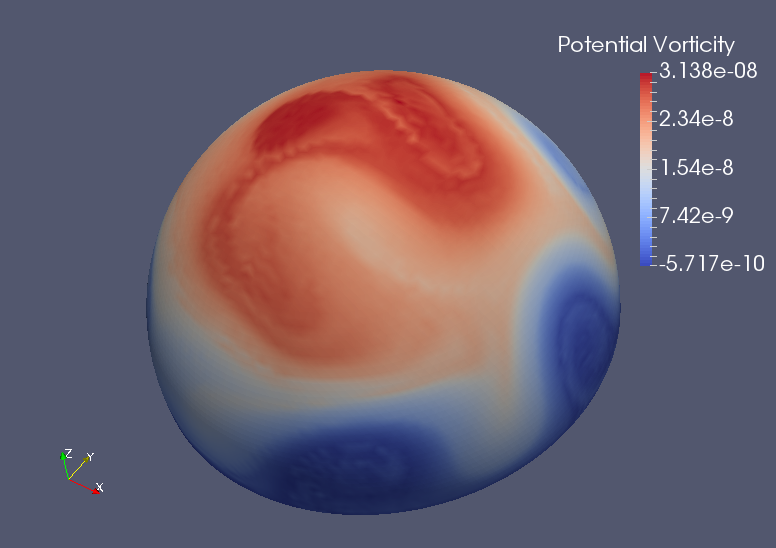}
  }
  \caption{\label{fig:supg6}Snapshots of the hemisphere version of the Williamson 5 testcase showing potential vorticity, at refinement level 6, plotted every 180000 seconds.}
\end{figure}

\begin{figure}
  \centerline{
  \includegraphics[width=12cm]{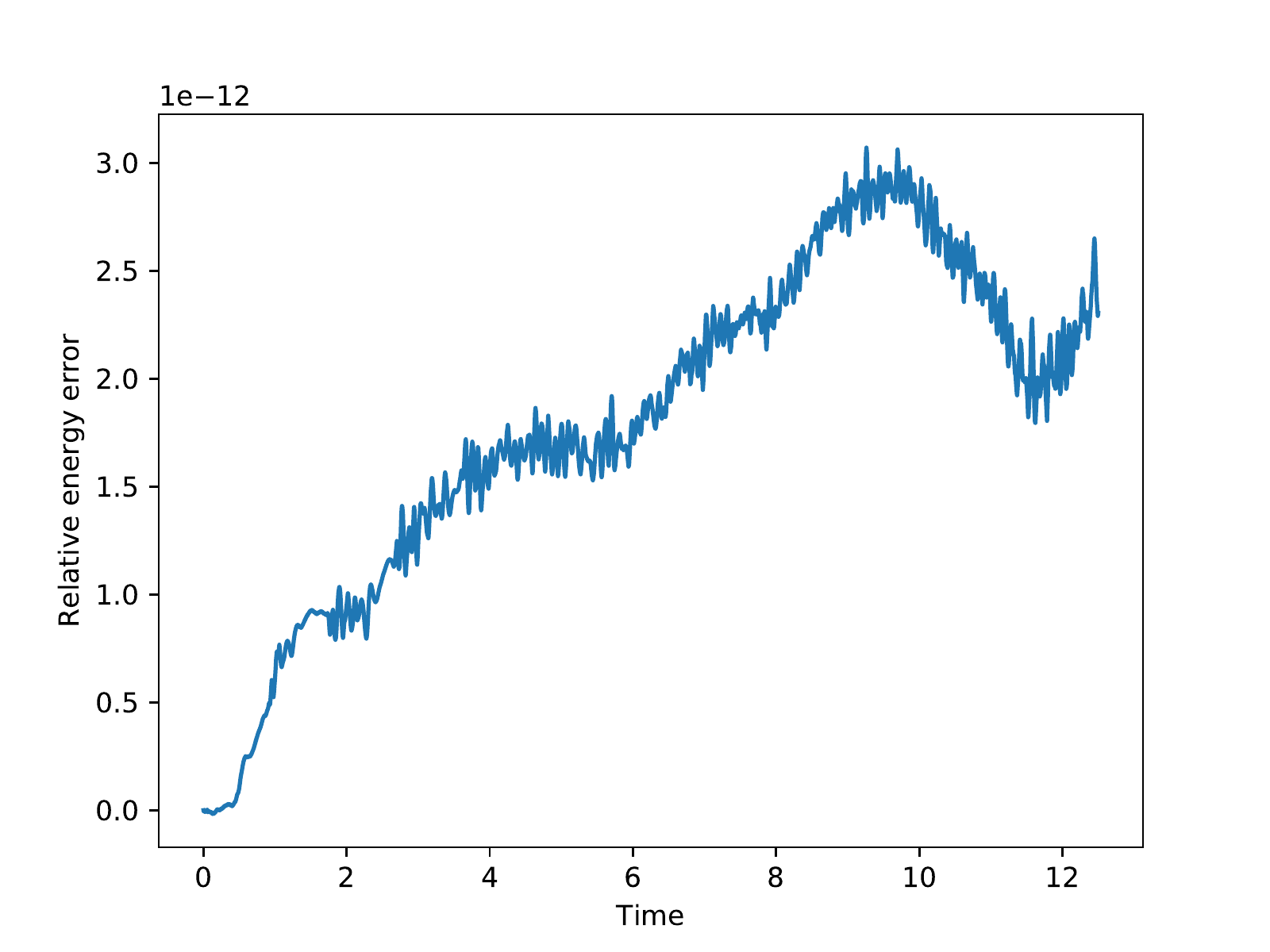}}
  \centerline{
  \includegraphics[width=12cm]{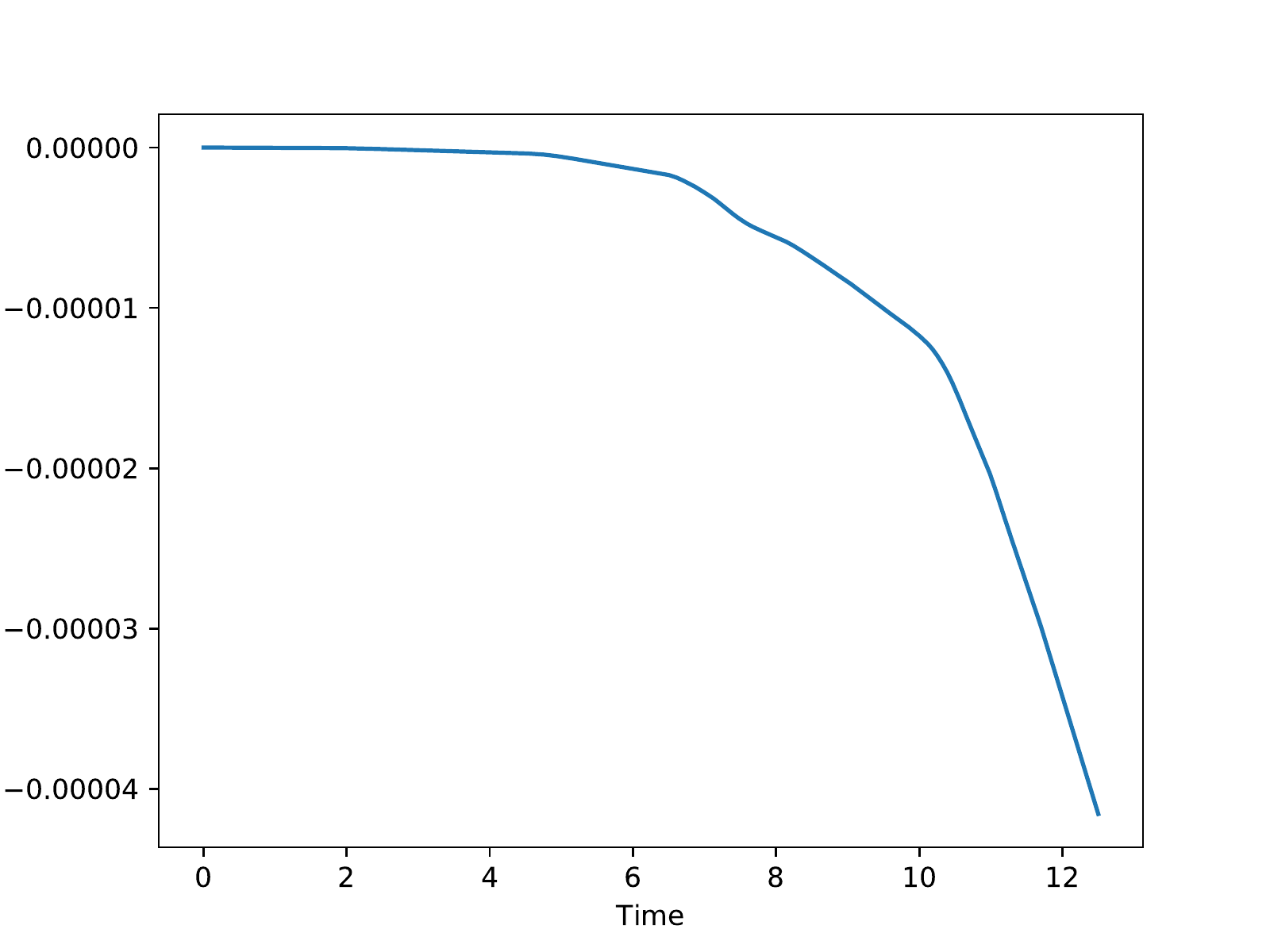}
  }
  \caption{\label{fig:w5enens}Plots showing relative energy and
    enstrophy errors for the Williamson 5 test case (modified to run
    on a hemisphere) at mesh refinement level 6 with $\Delta t=10$. We
    observe approximate energy conservation for all times. We observe
    approximate enstrophy conservation initially, and then decay of
    enstrophy once the solution becomes unresolved.}
\end{figure}

\begin{figure}
  \centerline{
    \includegraphics[width=12cm]{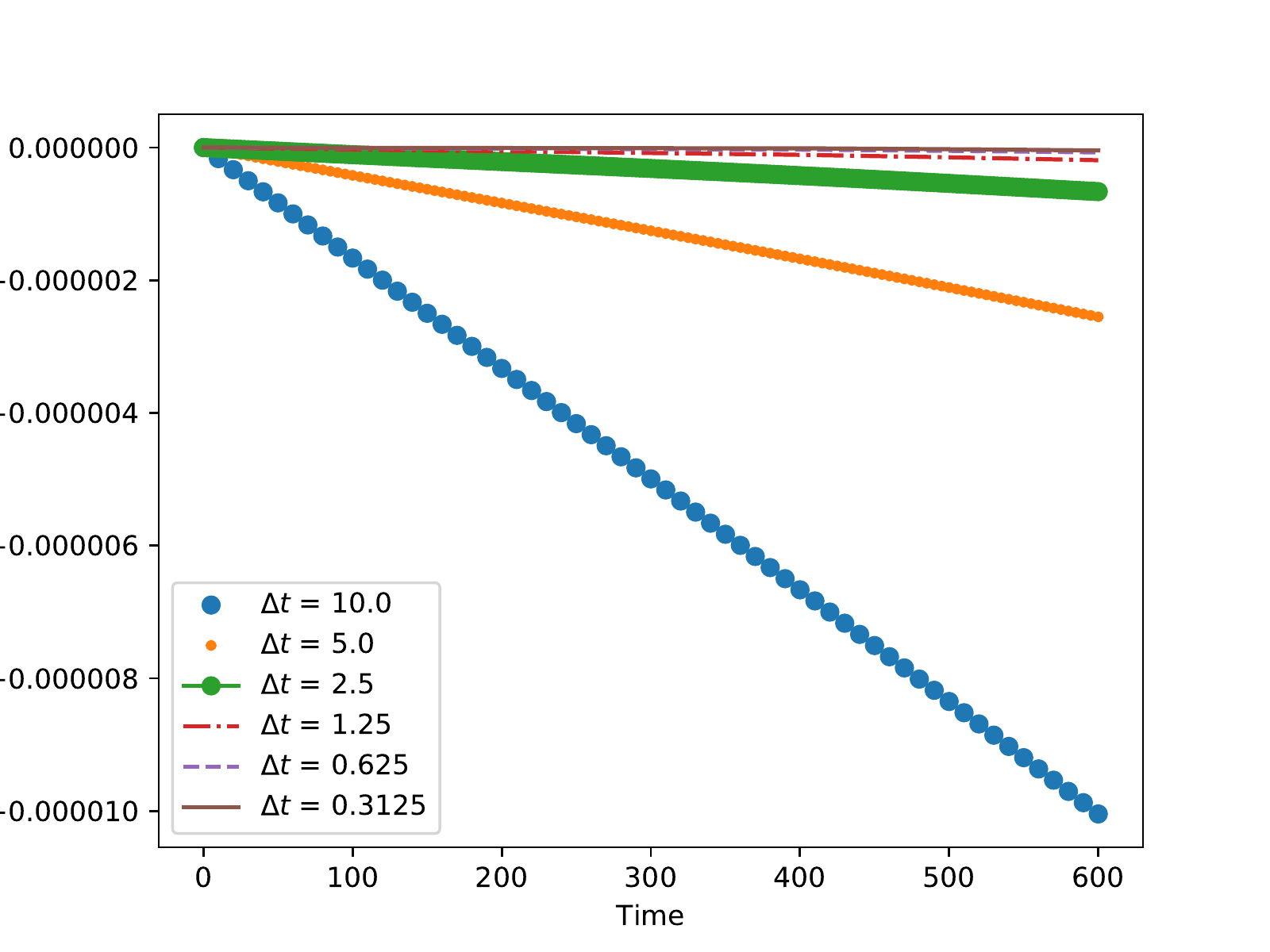}
  }
  \caption{\label{fig:longrun}\revised{Plots showing convergence of energy
    error on a finite time interval to zero as $\Delta t$ is
    reduced. The mountain test was run for 15 days at refinement level
    6, and then this was used as an initial condition for the
    convergence test. This shows that the spatial discretisation is
    exactly preserving energy and any errors are due to the fact that
    the Picard iteration scheme has not fully converged to the Poisson
    integrator.}}
\end{figure}

\section{Summary and outlook}
\label{sec:summary}
In this paper we provided a compatible finite element scheme for the
rotating shallow water equations in the presence of boundaries. The
prognostic variables are velocity and height, plus vorticity on the
boundary. The scheme has an equivalent formulation where one discards
the potential vorticity - velocity relationship (which is then a
consequence of the subsequent equations) in favour of a potential
vorticity evolution equation everywhere in the domain. This becomes
reminiscent of the energy-enstrophy mimetic spectral element
conserving formulation of \citet{palha2017mass}, in which both
vorticity and velocity are maintained as prognostic variables and a
time-staggered discretisation leads to efficient implementation with
energy and enstrophy conservation, although in that case the
correspondence between velocity and vorticity is not guaranteed by the timestepping scheme.  In our numerical experiments we used an implicit
energy-conserving timestepping scheme, which preserves the
correspondence between velocity and vorticity, but does not preserve
enstrophy {exactly}.  This scheme was implemented via Newton's method
which does require the assembly of a Jacobian operator during each
iteration. We observed optimal convergence rates when running the
solid body rotation testcase. We then modified the scheme to an SUPG
enstrophy dissipating scheme which still conserves energy. Using a
more efficient time integration scheme that backs off from energy
conservation, we demonstrated that this
scheme conserves total potential vorticity and has good energy behaviour whilst dissipating
enstrophy once fine vorticity filaments form, as is consistent with
the enstrophy cascade for 2D geostrophic turbulence.

In future work we will explore the development of vorticity-based
schemes for the three-dimensional compressible Euler equations in
the context of numerical weather prediction.

\paragraph{Acknowledgements} The authors would like to thank Chris Eldred
for suggesting the energy-conserving Poisson integrator, and John
Thuburn, Nigel Wood, and Laurent Desbreu for useful discussions.  CJC
acknowledges funding from NERC grant NE/M013634/1. WB acknowledges
funding from the European Union’s Horizon 2020 research and innovation
programme under the Marie Sk{\l}odowska-Curie grant agreement No. 657016.

\bibliography{ee_bcs}

\begin{thebibliography}{29}
\expandafter\ifx\csname natexlab\endcsname\relax\def\natexlab#1{#1}\fi
\expandafter\ifx\csname url\endcsname\relax
  \def\url#1{\texttt{#1}}\fi
\expandafter\ifx\csname urlprefix\endcsname\relax\def\urlprefix{URL }\fi

\bibitem[{Arakawa and Hsu(1990)}]{arakawa1990energy}
Arakawa, A., Hsu, Y.-J.~G., 1990. Energy conserving and potential-enstrophy
  dissipating schemes for the shallow water equations. Monthly Weather Review
  118~(10), 1960--1969.

\bibitem[{Arakawa and Lamb(1981)}]{arakawa1981potential}
Arakawa, A., Lamb, V.~R., 1981. A potential enstrophy and energy conserving
  scheme for the shallow water equations. Monthly Weather Review 109~(1),
  18--36.

\bibitem[{Cohen and Hairer(2011)}]{cohen2011linear}
Cohen, D., Hairer, E., 2011. Linear energy-preserving integrators for {P}oisson
  systems. BIT Numerical Mathematics 51~(1), 91--101.

\bibitem[{Cotter and Thuburn(2014)}]{cotter2014finite}
Cotter, C.~J., Thuburn, J., 2014. A finite element exterior calculus framework
  for the rotating shallow-water equations. Journal of Computational Physics
  257, 1506--1526.

\bibitem[{Eldred et~al.(2016)Eldred, Dubos, and Kritsikis}]{eldred2016high}
Eldred, C., Dubos, T., Kritsikis, E., 2016. High-order mimetic finite elements
  for the hydrostatic primitive equations on a cubed-sphere grid using
  {H}amiltonian methods. In: EGU General Assembly Conference Abstracts.
  Vol.~18. p. 17338.

\bibitem[{Eldred and Randall(2017)}]{eldred2017total}
Eldred, C., Randall, D., 2017. Total energy and potential enstrophy conserving
  schemes for the shallow water equations using {H}amiltonian methods - {P}art
  1: {D}erivation and properties. Geoscientific Model Development 10~(2), 791.

\bibitem[{Gibson et~al.(2018)Gibson, Mitchell, DA, and CJ}]{gibson18}
Gibson, T., Mitchell, L., DA, H., CJ, C., 2018. A domain-specific language for
  the hybridization and static condensation of finite element methods, in
  preparation.

\bibitem[{Hallberg and Rhines(1996)}]{hallberg1996buoyancy}
Hallberg, R., Rhines, P., 1996. Buoyancy-driven circulation in an ocean basin
  with isopycnals intersecting the sloping boundary. Journal of Physical
  Oceanography 26~(6), 913--940.

\bibitem[{Hirani(2003)}]{hirani2003discrete}
Hirani, A.~N., 2003. Discrete exterior calculus. Ph.D. thesis, California
  Institute of Technology.

\bibitem[{Holm et~al.(1998)Holm, Marsden, and Ratiu}]{Holm98}
Holm, D.~D., Marsden, J.~E., Ratiu, T.~S., 1998. {The Euler-Poincar\'{e}
  Equations and Semidirect Products with Applications to Continuum Theories}.
  Advances in Mathematics 137~(1), 1--81.
\newline\urlprefix\url{http://www.sciencedirect.com/science/article/pii/S0001870898917212}

\bibitem[{Hughes(1995)}]{hughes95}
Hughes, T. J.~R., 1995. {Multiscale phenomena: Green's functions, the
  Dirichlet-to-Neumann formulation, subgrid scale models, bubbles and the
  origins of stabilized methods}. Computer Methods in Applied Mechanics and
  Engineering 127~(1), 387--401.
\newline\urlprefix\url{http://www.sciencedirect.com/science/article/pii/0045782595008449}

\bibitem[{Ketefian and Jacobson(2009)}]{ketefian2009mass}
Ketefian, G., Jacobson, M., 2009. A mass, energy, vorticity, and potential
  enstrophy conserving lateral fluid--land boundary scheme for the shallow
  water equations. Journal of Computational Physics 228~(1), 1--32.

\bibitem[{Lee et~al.(2017)Lee, Palha, and Gerritsma}]{lee2017discrete}
Lee, D., Palha, A., Gerritsma, M., 2017. Discrete conservation properties for
  shallow water flows using mixed mimetic spectral elements. arXiv preprint
  arXiv:1707.00346.

\bibitem[{McRae(2015)}]{mcrae2015compatible}
McRae, A. T.~T., 2015. Compatible finite element methods for atmospheric
  dynamical cores.

\bibitem[{McRae and Cotter(2014)}]{mcrae2014energy}
McRae, A. T.~T., Cotter, C.~J., 2014. Energy-and enstrophy-conserving schemes
  for the shallow-water equations, based on mimetic finite elements. Quarterly
  Journal of the Royal Meteorological Society 140~(684), 2223--2234.

\bibitem[{Natale and Cotter(2017)}]{natale2017scale}
Natale, A., Cotter, C.~J., 2017. Scale-selective dissipation in
  energy-conserving finite element schemes for two-dimensional turbulence.
  Quarterly Journal of the Royal Meteorological Society.

\bibitem[{Palha and Gerritsma(2017)}]{palha2017mass}
Palha, A., Gerritsma, M., 2017. A mass, energy, enstrophy and vorticity
  conserving ({MEEVC}) mimetic spectral element discretization for the 2d
  incompressible {Navier--Stokes} equations. Journal of Computational Physics
  328, 200--220.

\bibitem[{Rathgeber et~al.(2016)Rathgeber, Ham, Mitchell, Lange, Luporini,
  McRae, Bercea, Markall, and Kelly}]{rathgeber2016firedrake}
Rathgeber, F., Ham, D.~A., Mitchell, L., Lange, M., Luporini, F., McRae, A.~T.,
  Bercea, G.-T., Markall, G.~R., Kelly, P.~H., 2016. Firedrake: automating the
  finite element method by composing abstractions. ACM Transactions on
  Mathematical Software (TOMS) 43~(3), 24.

\bibitem[{Ringler et~al.(2010)Ringler, Thuburn, Klemp, and
  Skamarock}]{ringler2010unified}
Ringler, T.~D., Thuburn, J., Klemp, J.~B., Skamarock, W.~C., 2010. A unified
  approach to energy conservation and potential vorticity dynamics for
  arbitrarily-structured c-grids. Journal of Computational Physics 229~(9),
  3065--3090.

\bibitem[{Sadourny(1975)}]{sadourny1975dynamics}
Sadourny, R., 1975. The dynamics of finite-difference models of the
  shallow-water equations. Journal of the Atmospheric Sciences 32~(4),
  680--689.

\bibitem[{Salmon(2004)}]{salmon2004poisson}
Salmon, R., 2004. Poisson-bracket approach to the construction of energy-and
  potential-enstrophy-conserving algorithms for the shallow-water equations.
  Journal of the Atmospheric Sciences 61~(16), 2016--2036.

\bibitem[{Salmon(2005)}]{salmon2005general}
Salmon, R., 2005. A general method for conserving quantities related to
  potential vorticity in numerical models. Nonlinearity 18~(5), R1.

\bibitem[{Salmon(2007)}]{salmon2007general}
Salmon, R., 2007. A general method for conserving energy and potential
  enstrophy in shallow-water models. Journal of the Atmospheric Sciences
  64~(2), 515--531.

\bibitem[{Salmon(2009)}]{salmon2009shallow}
Salmon, R., 2009. A shallow water model conserving energy and potential
  enstrophy in the presence of boundaries. Journal of Marine Research 67~(6),
  779--814.

\bibitem[{Stewart and Dellar(2016)}]{stewart2016energy}
Stewart, A.~L., Dellar, P.~J., 2016. An energy and potential enstrophy
  conserving numerical scheme for the multi-layer shallow water equations with
  complete {C}oriolis force. Journal of Computational Physics 313, 99--120.

\bibitem[{Tezduyar(1989)}]{tezduyar89}
Tezduyar, T., 1989. {Finite element formulation for the vorticity-stream
  function form of the incompressible Euler equations on multiply-connected
  domains }. Computer Methods in Applied Mechanics and Engineering 73,
  331--339.

\bibitem[{Tezduyar et~al.(1988)Tezduyar, Glowinski, and Liou}]{tezduyar88}
Tezduyar, T., Glowinski, R., Liou, J., 1988. {Petrov-Galerkin methods on
  multiply connected domains for the vorticity-stream function formulation of
  the incompressible Navier-Stokes equations}. International journal for
  numerical methods in fluids 8~(10), 1269--1290.

\bibitem[{Thuburn and Cotter(2012)}]{thuburn2012framework}
Thuburn, J., Cotter, C.~J., 2012. A framework for mimetic discretization of the
  rotating shallow-water equations on arbitrary polygonal grids. SIAM Journal
  on Scientific Computing 34~(3), B203--B225.

\bibitem[{Williamson et~al.(1992)Williamson, Drake, Hack, Jakob, and
  Swarztrauber}]{williamson1992standard}
Williamson, D.~L., Drake, J.~B., Hack, J.~J., Jakob, R., Swarztrauber, P.~N.,
  1992. A standard test set for numerical approximations to the shallow water
  equations in spherical geometry. Journal of Computational Physics 102~(1),
  211--224.

\end{thebibliography}
\end{document}